\theoremstyle{plain}
\newtheorem*{theorem*}{\indent Theorem}
\newtheorem{theorem}{\indent Theorem}[section]
\newtheorem{corollary}[theorem]{\indent Corollary}
\newtheorem{proposition}[theorem]{\indent Proposition}
\newtheorem{question}[theorem]{\indent Question}
\theoremstyle{definition}
\newtheorem*{example}{\indent Example}
\theoremstyle{remark}
\newtheorem{rem}[theorem]{\indent Remark}
\DeclareMathOperator{\spn}{span}
\DeclareMathOperator{\conv}{conv}
\newcommand{\xast}{x^{\ast}}
\newcommand{\Xast}{X^{\ast}}
\newcommand{\iten}{\ensuremath{\widehat{\otimes}_\varepsilon}}
\newcommand{\pten}{\ensuremath{\widehat{\otimes}_\pi}}
\title[ASQ and OH norms in tensor product spaces]{Almost square and octahedral norms in tensor products of Banach spaces}
\author[J. Langemets, V. Lima and A. Rueda Zoca]{Johann Langemets, Vegard Lima and Abraham Rueda Zoca}
\address{Institute of Mathematics and Statistics, University of Tartu, J. Liivi 2, 50409 Tartu, Estonia} \email{johann.langemets@ut.ee}
\thanks{The research of J.~Langemets was supported by institutional research funding IUT20-57 of the Estonian Ministry of Education and Research.}
\address{NTNU in Ålesund, Postboks 1517, 6025 Ålesund, Norway} \email{Vegard.Lima@gmail.com}
\address{Universidad de Granada, Facultad de Ciencias.
Departamento de An\'{a}lisis Matem\'{a}tico, 18071-Granada
(Spain)} \email{arz0001@correo.ugr.es}\thanks{Third author was partially supported by Junta de Andaluc\'ia Grants FQM-0199.}
\begin{document}

\maketitle

\begin{abstract}
The aim of this note is to study some geometrical properties like
diameter two properties, octahedrality and almost squareness in the
setting of (symmetric) tensor product spaces.
In particular, we show that the injective tensor product
of two octahedral Banach spaces is always octahedral,
the injective tensor product of an almost square Banach
space with any Banach space is almost square,
and the injective symmetric tensor product of
an octahedral Banach space is octahedral.
\end{abstract}

\section{Introduction}
Let $X$ be a (real) Banach space
with closed unit ball $B_X$ and unit sphere $S_X$.
Following \cite{aln}, we will say that
\begin{enumerate}
\item
  $X$ has the \emph{local diameter two property} (LD2P) whenever
  each slice of $B_X$ has diameter two.
\item
  $X$ has the \emph{diameter two property} (D2P) whenever each
  non-empty relatively weakly open subset of $B_X$ has diameter
  two.
\item
  $X$ has the \emph{strong diameter two property} (SD2P)
  whenever each finite convex combination of slices of $B_X$ has
  diameter two.
\end{enumerate}
Similarly, for dual spaces one can define the \emph{$w^*$-LD2P},
the \emph{$w^*$-D2P} and the \emph{$w^*$-SD2P} by replacing slices and
weakly open subsets with weak$^*$ slices and weak$^*$ open subsets
in the above definitions.

The starting point for the study of diameter two properties
was probably \cite{nw}.
Recent years have seen a lot of activity in
the study of diameter two properties, see
\cite{ahntt,aln,blrext,hlp} and the references therein.

In order to characterize the dual of spaces with
diameter two properties Haller, Langemets and P{\~o}ldvere
studied octahedral norms in \cite{hlp}.
There they proved the following characterizations
of octahedral norms that we will take as our definitions.
A Banach space $X$ is
\begin{enumerate}
\item
  \emph{locally octahedral} (LOH) if for every $x\in S_X$
  and $\varepsilon>0$ there exists $y\in S_X$ such that
  $\Vert x\pm y\Vert>2-\varepsilon$.
\item
  \emph{weakly octahedral} (WOH) if for every
  $x_1,\ldots, x_n\in S_X$, $x^*\in B_{X^*}$ and $\varepsilon>0$
  there exists $y\in S_X$ such that
  $\Vert x_i\pm ty\Vert\geq (1-\varepsilon)(\vert x^*(x_i)\vert +t)$
  for all $i\in\{1,\ldots, n\}$ and $t>0$.
\item
  \emph{octahedral} (OH) if for every $x_1,\ldots, x_n\in S_X$ and
  $\varepsilon>0$ there exists $y\in S_X$ such that
  $\Vert x_i+y\Vert>2-\varepsilon$ for all $i\in\{1,\ldots, n\}$.
\end{enumerate}

It is known that a Banach space $X$ has the SD2P (respectively D2P, LD2P)
if, and only if, $X^*$ is OH (respectively WOH, LOH) (cf.\ e.g.\ \cite{hlp}).

A family of geometrical properties closely related to
diameter two properties and octahedrality is the following
which was introduced in \cite{all}.
A Banach space $X$ is
\begin{enumerate}
\item
  \emph{locally almost square} (LASQ) if for every $x\in S_X$
  there exists a sequence $\{y_n\}$ in $B_X$ such that
  $\Vert x\pm y_n\Vert\rightarrow 1$ and $\Vert y_n\Vert\rightarrow 1$.
\item
  \emph{weakly almost square} (WASQ) if for every $x\in S_X$
  there exists a sequence $\{y_n\}$ in $B_X$ such that
  $\Vert x\pm y_n\Vert\rightarrow 1$, $\Vert y_n\Vert\rightarrow 1$
  and $y_n \rightarrow 0$ weakly.
\item
  \emph{almost square} (ASQ) if for every $x_1,\ldots, x_k \in S_X$
  there exists a sequence $\{y_n\}$ in $B_X$ such
  that $\Vert y_n\Vert\rightarrow 1$ and $\Vert x_i\pm y_n\Vert\rightarrow 1$
  for every $i\in\{1,\ldots, k\}$.
\end{enumerate}

It is known that the sequence involved in the definition of ASQ
can be chosen to be weakly-null \cite[Theorem~2.8]{all}, so
ASQ implies WASQ which in turn implies LASQ\@.
Moreover, ASQ implies the SD2P,
WASQ implies the D2P, and LASQ implies the LD2P
(see \cite{all}, but note that
the latter two statements were proved in \cite{kub}).

Diameter two properties of (symmetric) tensor products,
both injective and projective, have attracted the attention
of many researchers. In \cite[Question (b)]{aln}, it was explicitly posed as an open question
how diameter two properties are preserved by tensor products.
Partial answers,
which strongly rely on infinite-dimensional centralizers,
appeared in \cite{abr} and \cite{ab2}. For instance,
it is known that the projective tensor product
of a $C(K)$ space with any non-zero Banach space has the D2P
\cite[Theorem~4.1]{abr} and that the symmetric projective tensor
product of any $L_1(\mu)$ space has the D2P \cite[Theorem~3.3]{ab2}.
However, the assumption of having infinite-dimensional centralizer
have been shown to be far from necessary. In the symmetric case it has been recently proved that the symmetric projective tensor
product of any ASQ space has the SD2P \cite[Theorem~3.3]{blr3}.
Furthermore, in the non-symmetric case there are even stability
results for some diameter two properties, e.g.\ both the LD2P and the SD2P
are stable by taking projective tensor product
(see \cite[Theorem~2.7]{aln} and \cite[Corollary~3.6]{blr}).
In spite of the previous nice results,
the interplay between diameter two properties, octahedrality,
and almost squareness with respect to tensor products is
currently not well understood. Thus, the aim of this note is to go further and study
octahedrality and almost squareness and their
relations to (symmetric) tensor products
equipped with the injective or projective norm.

In Section~\ref{tensorproducts} we start by
showing that octahedrality is stable by taking injective
tensor products, see Theorem~\ref{theoctaopera} and
Corollary~\ref{estaoctainje}.
Our results on octahedrality of injective tensor products
are refinements of the results of
\cite{blr}, where octahedrality of spaces of operators
were deeply studied.
Next we turn to almost squareness.
A consequence of Theorem~\ref{asqoperator}
is that the injective tensor product of an ASQ space
with any non trivial Banach space is ASQ\@.
The projective norm is much more difficult to work
with, but we are able to show that the projective
tensor product of $c_0$ with any non-zero Banach space is LASQ
in Proposition~\ref{tensorc0ASQ}.

In Section~\ref{symtenprod} we study the symmetric tensor products.
Our main result is Theorem~\ref{estaoctainjesyme} which is a stability
result of octahedrality for injective symmetric tensor products.
It will allow us to significantly improve some results from \cite[Section~4]{ab}
(see Corollary~\ref{integralpolyD2P}).
Combining WASQ and the Dunford-Pettis property
we get LD2P of the projective symmetric tensor product
in Proposition~\ref{dunfordpettistensoprosym}.
We will also give an inheritance result
for WASQ spaces which, combined with the above
result, will result in a criterion for
the symmetric projective tensor product of
a subspace of $L_1(\mu)$ to have the LD2P\@.

We close the paper with some open questions in Section~\ref{remarks}.

We use standard Banach space notation, as can be found in e.g.\ \cite{alka}.
Given Banach spaces $X$ and $Y$, $L(X,Y)$ (respectively $K(X,Y)$)
will denote the space of bounded linear operators
(respectively compact operators) from $X$ to $Y$.
We will consider only non-zero Banach spaces.

\section{Tensor product spaces}
\label{tensorproducts}

Recall that given two Banach spaces $X$ and $Y$, the
\textit{injective tensor product} of $X$ and $Y$, denoted by
$X \iten Y$, is the completion of $X\otimes Y$ under the norm given by
\begin{equation*}
   \Vert u\Vert:=\sup
   \left\{
      \sum_{i=1}^n \vert x^*(x_i)y^*(y_i)\vert
      : x^*\in S_{X^*}, y^*\in S_{Y^*}
   \right\},
\end{equation*}
where $u:=\sum_{i=1}^n x_i\otimes y_i$.
Every $u \in X \iten Y$ can be viewed as an operator $T_u : X^* \rightarrow Y$ which is weak$^*$-to-weakly continuous.
In particular, $y^* \circ T_u$ belongs to $X$ for all $y^* \in Y^*$ (see \cite{rya} for background).

Quite a lot is known about octahedrality of spaces of operators.
It is essentially known that the injective
tensor product $X \iten Y$ of two Banach spaces
is LOH if one of the spaces is LOH
(see the comment following Lemma~2.3 in \cite{blr} or \cite[Theorem~3.39]{lan}).
Hence it is natural to wonder when an
injective tensor product is OH\@.
Before we can state our results about octahedrality of $X \iten Y$ we will
need to introduce a bit of notation.
Given a Banach space $X$ and a norm one element $u$ in $X$, define
\begin{equation*}
   D(X,u):=\{f\in B_{X^*}: f(u)=1\}.
\end{equation*}
Define $n(X,u)$ as the largest non-negative real number $k$ satisfying
\begin{equation*}
   k\Vert x\Vert\leq 
   \sup\{ \vert f(x)\vert: f\in D(X,u)\}
\end{equation*}
for all $x\in X$ (see the discussion following Theorem~3.6
in \cite{abr} for examples of spaces which have such unitaries).
Note that $n(X,u)=1$ if, and only if, $D(X,u)$ is a norming subset for $X$.
We will also need the following geometrical characterization of
octahedrality that is proved in \cite{lan}.
\begin{theorem}[{\cite[Theorem~3.21]{lan}}]\label{charaOHlanthe}
Let $X$ be a Banach space. $X$ is OH if, and only if, whenever
$E$ is a finite-dimensional subspace of $X$,
$x_1^*,\ldots, x_n^*\in B_{X^*}$, $\varepsilon>0$ and
$0<\varepsilon_0<\varepsilon$ there exists $y\in S_X$ such that,
whenever $\vert\gamma_i\vert\leq 1+\varepsilon_0$, there exists
$y_i^*\in X^*$ satisfying
$\Vert y_i^*\Vert\leq 1+\varepsilon$
for all $i\in\{1,\ldots, n\}$,
$y^*_{i}|_E = x^*_{i}|_E$, and
$y_i^*(y)=\gamma_i$.
\end{theorem}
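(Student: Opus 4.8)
The plan is to derive the characterisation from two ingredients. The first is the standard finite-dimensional reformulation of octahedrality: $X$ is OH if and only if for every finite-dimensional subspace $E\subseteq X$ and every $\delta>0$ there is $y\in S_X$ with $\Vert x+y\Vert\geq(1-\delta)(\Vert x\Vert+1)$ for all $x\in E$ (see \cite{hlp}); since $E$ is a subspace, applying this to $-x$ also yields $\Vert x-y\Vert\geq(1-\delta)(\Vert x\Vert+1)$. The second is the elementary Hahn--Banach computation of the norm of a one-dimensional extension: if $E\subseteq X$ is a subspace, $g\in E^*$, $y\notin E$ and $\gamma$ a scalar, then the unique functional $h$ on $E\oplus\mathbb Ry$ with $h|_E=g$ and $h(y)=\gamma$ satisfies
\[
\Vert h\Vert=\sup\Bigl\{\tfrac{\vert g(x)+t\gamma\vert}{\Vert x+ty\Vert}:x\in E,\ t\in\mathbb R,\ x+ty\neq0\Bigr\},
\]
and, by rescaling $t$ to $0$ and to $\pm1$, this equals $\max\{\Vert g\Vert,\ \sup_{x\in E}\vert g(x)+\gamma\vert/\Vert x+y\Vert,\ \sup_{x\in E}\vert g(x)-\gamma\vert/\Vert x-y\Vert\}$; by Hahn--Banach $h$ then extends to all of $X$ with unchanged norm.

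For the implication ``$X$ OH $\Rightarrow$ the stated property'', fix $E$, $x_1^*,\dots,x_n^*\in B_{X^*}$ and $0<\varepsilon_0<\varepsilon$, and choose $\delta>0$ with $(1+\varepsilon_0)/(1-\delta)\leq 1+\varepsilon$, which is possible precisely because $\varepsilon_0<\varepsilon$. Using the reformulation above, pick $y\in S_X$ with $\Vert x\pm y\Vert\geq(1-\delta)(\Vert x\Vert+1)$ for all $x\in E$; note this already forces $y\notin E$, since otherwise $x=-y$ would give $0\geq2(1-\delta)$. Then for all scalars $\vert\gamma_i\vert\leq 1+\varepsilon_0$ and all $x\in E$,
\[
\frac{\vert x_i^*(x)+\gamma_i\vert}{\Vert x+y\Vert}\leq\frac{\Vert x\Vert+1+\varepsilon_0}{(1-\delta)(\Vert x\Vert+1)}\leq\frac{(1+\varepsilon_0)(\Vert x\Vert+1)}{(1-\delta)(\Vert x\Vert+1)}\leq 1+\varepsilon,
\]
and the same bound holds with $-y$; together with $\Vert x_i^*|_E\Vert\leq 1$, the extension formula provides $y_i^*\in X^*$ with $\Vert y_i^*\Vert\leq 1+\varepsilon$, $y_i^*|_E=x_i^*|_E$ and $y_i^*(y)=\gamma_i$. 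Since this estimate is uniform in the $\gamma_i$, the single vector $y$ works for every admissible choice of them, which is exactly what is required.

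For the converse, assume the stated property and fix $x_1,\dots,x_n\in S_X$ and $\eta>0$; we may assume $\eta<2$. Apply the property to $E:=\spn\{x_1,\dots,x_n\}$, to functionals $x_i^*\in B_{X^*}$ chosen with $x_i^*(x_i)=1$, and to any $\varepsilon,\varepsilon_0$ with $0<\varepsilon_0<\varepsilon$ and $2/(1+\varepsilon)>2-\eta$. This yields $y\in S_X$; taking $\gamma_i:=1$ for every $i$ produces $y_i^*\in X^*$ with $\Vert y_i^*\Vert\leq 1+\varepsilon$, $y_i^*(x_i)=x_i^*(x_i)=1$ and $y_i^*(y)=1$, so that $\Vert x_i+y\Vert\geq y_i^*(x_i+y)/\Vert y_i^*\Vert\geq 2/(1+\varepsilon)>2-\eta$ for all $i$. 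Hence $X$ is OH.

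Most of the argument is bookkeeping with $\varepsilon$, $\varepsilon_0$ and $\delta$ together with the routine extension formula. The one step carrying genuine content is the finite-dimensional reformulation of octahedrality; if one wants a self-contained proof, the main obstacle is to pass from the defining property, which only controls $\Vert x_i+y\Vert$ for finitely many unit vectors, to the uniform estimate $\Vert x+y\Vert\geq(1-\delta)(\Vert x\Vert+1)$ over the whole subspace $E$. I would handle this by fixing a finite net of the unit sphere of $E$, applying octahedrality along it to obtain $y\in S_X$ with $\Vert e'+y\Vert>2-\delta'$ for each net point $e'$, observing that a norming functional $f$ of $e'+y$ satisfies $f(e')>1-\delta'$ and $f(y)>1-\delta'$, hence $\Vert se'+y\Vert\geq sf(e')+f(y)\geq(s+1)(1-\delta')$ for all $s\geq0$, and finally combining this with the net approximation to reach every $x\in E$.
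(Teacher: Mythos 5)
Your proof is correct. Note that the paper itself offers no argument for this statement---it is quoted verbatim from \cite[Theorem~3.21]{lan}---so there is nothing in the text to compare against; what you have written is a sound, self-contained reconstruction along the standard lines. The two ingredients you isolate are exactly the right ones: Godefroy's finite-dimensional reformulation of octahedrality ($\Vert x+y\Vert\geq(1-\delta)(\Vert x\Vert+1)$ uniformly over a finite-dimensional subspace $E$), and the explicit norm formula for a one-dimensional Hahn--Banach extension off $E\oplus\mathbb{R}y$. Your bookkeeping checks out: the choice of $\delta$ with $(1+\varepsilon_0)/(1-\delta)\leq1+\varepsilon$ is exactly where the hypothesis $\varepsilon_0<\varepsilon$ is consumed, the bound $\Vert x\Vert+1+\varepsilon_0\leq(1+\varepsilon_0)(\Vert x\Vert+1)$ is what makes the estimate uniform in $x\in E$ and in the $\gamma_i$ (so that one $y$ serves all admissible $\gamma_i$, as the statement requires), and the observation that $y\notin E$ is needed for the extension to be well defined. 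The converse direction, taking $\gamma_i=1$ against norming functionals of the $x_i$, is the easy half and is handled correctly. You are also right that the only step with real content is upgrading the defining property of OH (finitely many unit vectors) to the uniform estimate over all of $E$; your net-plus-norming-functional sketch for that step is the standard argument and works, provided one takes the net symmetric (or notes $S_E=-S_E$) and absorbs the net error $\Vert x\Vert\eta$ into $(\Vert x\Vert+1)\eta$ as you indicate.
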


Our first theorem is a version of
\cite[Theorem~3.5]{blr} and \cite[Theorem~3.1]{blr}
stated in the context of weak$^*$-to-weakly continuous operators.
We include a proof that uses Theorem~\ref{charaOHlanthe}
instead of working with the $w^*$-SD2P of the dual space.

\begin{theorem}\label{theoctaopera}
Let $X$ and $Y$ be Banach spaces and let $H\subseteq L(X^*,Y)$ be a closed subspace
such that $X\otimes Y\subseteq H$. Assume that each $T\in H$ is
weak$^*$-to-weakly continuous.

\begin{enumerate}
\item\label{mejooctaopera} If $X$ and $Y$ are OH, then $H$ is OH\@.
\item\label{corounillocta} If $X$ is OH and there exists $y\in S_{Y}$ such that
$n(Y,y)=1$, then $H$ is OH\@.
\end{enumerate}

\end{theorem}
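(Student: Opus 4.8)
The plan is to verify, for the space $H$, the octahedrality criterion of Theorem~\ref{charaOHlanthe}. Fix a finite-dimensional subspace $E\subseteq H$, functionals $\Phi_1,\dots,\Phi_n\in B_{H^*}$ and $0<\varepsilon_0<\varepsilon$; one has to exhibit $S\in S_H$ for which the stated extension property holds. The element $S$ will be an elementary tensor $x_0\otimes y_0$: here $x_0\in S_X$ will come from applying Theorem~\ref{charaOHlanthe} to $X$, while $y_0\in S_Y$ will come from applying Theorem~\ref{charaOHlanthe} to $Y$ in case~\eqref{mejooctaopera} and will be the prescribed unitary with $n(Y,y_0)=1$ in case~\eqref{corounillocta}. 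Since the norm on $H\subseteq L(X^*,Y)$ is the operator norm, $\|x_0\otimes y_0\|=\|x_0\|\,\|y_0\|=1$, so $S\in S_H$.

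The first step is a reduction to rank-one functionals. Writing $x^*\otimes y^*$ for the functional $T\mapsto y^*(Tx^*)$ on $H$, the identity $\|T\|=\sup\{|y^*(Tx^*)|:x^*\in B_{X^*},\,y^*\in B_{Y^*}\}$ shows that $\{x^*\otimes y^*:x^*\in B_{X^*},\,y^*\in B_{Y^*}\}$ is a symmetric norming subset of $B_{H^*}$ (in case~\eqref{corounillocta}, $n(Y,y_0)=1$ means $D(Y,y_0)$ is norming for $Y$, and then already $\{x^*\otimes y^*:x^*\in B_{X^*},\,y^*\in D(Y,y_0)\}$ norms $H$). Hence, for every $\delta>0$, each $\Phi_i|_E$ is within $\delta$ in $E^*$ of a signed convex combination $\sum_k\mu_{i,k}\,(x^*_{i,k}\otimes y^*_{i,k})|_E$, with $\sum_k|\mu_{i,k}|\le1$, the $x^*_{i,k}\in B_{X^*}$ and the $y^*_{i,k}$ in $B_{Y^*}$ --- or, in case~\eqref{corounillocta}, in $D(Y,y_0)$. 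Fixing a basis $T_1,\dots,T_m$ of $E$, I would then set $E_Y:=\spn\{T_jx^*_{i,k}\}\subseteq Y$, apply Theorem~\ref{charaOHlanthe} to $Y$ (resp.\ use the unitary) with $E_Y$ and the $y^*_{i,k}$ to obtain $y_0$ together with extensions $\widehat{y}^*_{i,k}$ of $y^*_{i,k}|_{E_Y}$ with a prescribed value at $y_0$ --- committing at this point to those prescribed values --- then put $E_X:=\spn(\{x^*_{i,k}\}\cup\{T_j^{*}\widehat{y}^*_{i,k}\}\cup\{T_j^{*}y_0^*\})\subseteq X$, where $y_0^*\in D(Y,y_0)$ is fixed and $T_j^{*}g=g\circ T_j\in X$ because $T_j$ is weak$^*$-to-weakly continuous, and finally apply Theorem~\ref{charaOHlanthe} to $X$ with $E_X$ and the $x^*_{i,k}$ to obtain $x_0$ and the extensions it provides.

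Given scalars $\gamma_i$ with $|\gamma_i|\le1+\varepsilon_0$, I would then define $\Phi_i'$ as the sum of three pieces: a main piece $\sum_k\mu_{i,k}\,\widehat{x}^*_{i,k}\otimes\widehat{y}^*_{i,k}$, where $\widehat{x}^*_{i,k}$ is the extension of $x^*_{i,k}|_{E_X}$ supplied by Theorem~\ref{charaOHlanthe} for $X$, of norm close to $1$ and with prescribed value at $x_0$; a rank-one piece $d_i\,(\widehat{x}^*_{0,i}\otimes y_0^*)$, where $\widehat{x}^*_{0,i}$ extends $0|_{E_X}$ with norm close to $1$ and value $1$ at $x_0$; and a Hahn--Banach piece of norm $\le\delta$ that corrects the restriction to $E$ from $\sum_k\mu_{i,k}(x^*_{i,k}\otimes y^*_{i,k})|_E$ up to $\Phi_i|_E$. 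The prescribed values at $x_0$ together with the scalar $d_i$ are then chosen so that $\Phi_i'(x_0\otimes y_0)=\gamma_i$. That $\Phi_i'|_E=\Phi_i|_E$ follows from the computations $\widehat{y}^*_{i,k}(T\widehat{x}^*_{i,k})=\langle\widehat{x}^*_{i,k},T^{*}\widehat{y}^*_{i,k}\rangle=y^*_{i,k}(Tx^*_{i,k})$ (using $T^{*}\widehat{y}^*_{i,k}\in E_X$ and $Tx^*_{i,k}\in E_Y$, together with the agreement of the extensions on those subspaces) and $y_0^*(T\widehat{x}^*_{0,i})=\langle\widehat{x}^*_{0,i},T^{*}y_0^*\rangle=0$, valid for every $T\in E$ precisely because the $\widehat{y}^*_{i,k}$ and $y_0^*$ were pinned down before $E_X$ was chosen.

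The crux, and the only genuinely delicate point, is the norm estimate $\|\Phi_i'\|\le1+\varepsilon$: the main piece has norm roughly $\sum_k|\mu_{i,k}|\approx\|\Phi_i|_E\|$ and the rank-one piece has norm roughly $|d_i|$, and the triangle inequality alone is far too lossy, since both can be close to $1$. Controlling the sum requires the \emph{full} strength of Theorem~\ref{charaOHlanthe} for $X$ (and for $Y$ in case~\eqref{mejooctaopera}) --- roughly, that $x_0$ is so far from $E_X$ that functionals ``living at $x_0$'', like $\widehat{x}^*_{0,i}$, hardly interfere with functionals built from $E_X$ --- together with a careful bookkeeping of the parameters $\varepsilon'$, $\varepsilon_0'$, $\delta$ used in the applications of the theorem and an appropriate choice of the prescribed scalars (e.g.\ letting the main piece absorb as much of $\gamma_i$ as its norm budget allows, so that $|d_i|$ is correspondingly small). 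Once this is in place, cases~\eqref{mejooctaopera} and~\eqref{corounillocta} differ only in how the $\widehat{y}^*_{i,k}$ and $y_0^*$ are obtained --- from Theorem~\ref{charaOHlanthe} applied to $Y$, or from the norming face $D(Y,y_0)$ --- and the verification of Theorem~\ref{charaOHlanthe} for $H$ is complete. (An alternative, which this approach avoids, would be to establish the $w^*$-strong diameter two property of $H^*$ directly.)
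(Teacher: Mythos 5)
There is a genuine gap, and it sits exactly where you flag it yourself. You set out to verify the extension criterion of Theorem~\ref{charaOHlanthe} \emph{for the space $H$}, and the decisive step of that plan --- the bound $\Vert\Phi_i'\Vert\leq 1+\varepsilon$ for the functional assembled from the main piece, the rank-one piece $d_i\,(\widehat{x}^*_{0,i}\otimes y_0^*)$ and the Hahn--Banach correction --- is not carried out, only asserted to follow from ``non-interference'' of $\widehat{x}^*_{0,i}$ with the functionals built from $E_X$. That mechanism is not available from Theorem~\ref{charaOHlanthe}: the theorem controls the norms of individual \emph{linear} functionals on $X$ extending prescribed restrictions, but to bound $\Vert\Phi_i'\Vert_{H^*}$ you must estimate $\Phi_i'(T)=\sum_k\mu_{i,k}\,\widehat{y}^*_{i,k}(T\widehat{x}^*_{i,k})+d_i\,y_0^*(T\widehat{x}^*_{0,i})+\cdots$ for \emph{every} $T\in B_H$, not only for $T\in E$. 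The cancellation $y_0^*(T\widehat{x}^*_{0,i})=\langle\widehat{x}^*_{0,i},T^*y_0^*\rangle=0$ that you establish uses $T^*y_0^*\in E_X$, which holds only for $T\in E$; for general $T\in B_H$ the two pieces can be simultaneously near-normed by the same $T$, and the triangle inequality then gives a bound near $\sum_k|\mu_{i,k}|+|d_i|$, which can approach $2$. Nothing in the characterization theorem rules this out, because it says nothing about norms of sums of elementary tensors acting on an operator space. So the proof is incomplete precisely at its crux, and I do not see how to close it along these lines without substantial new input.

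The larger point is that this route proves far more than is needed. To show $H$ is OH you only have to produce, for finitely many $T_1,\ldots,T_k\in S_H$ and $\varepsilon>0$, a single $S\in S_H$ with $\Vert T_i+S\Vert>2-\varepsilon$; you do not have to establish the full extension property of Theorem~\ref{charaOHlanthe} for $H$. The paper's proof does exactly this: choose $x_i^*\in S_{X^*}$, $y_i^*\in S_{Y^*}$ with $y_i^*(T_ix_i^*)>1-\varepsilon$; apply Theorem~\ref{charaOHlanthe} in $X$ to the finite-dimensional subspace $\spn\{T_i^*y_i^*\}\subseteq X$ (this is where weak$^*$-to-weak continuity enters, giving $T_i^*y_i^*\in X$) to obtain $w\in S_X$ and $w_i^*$ with $w_i^*(w)=1$, $\Vert w_i^*\Vert\leq 1+\varepsilon$ and $w_i^*=x_i^*$ on that subspace; then apply it in $Y$ to $\spn\{T_iw_i^*\}$ (or use the norming face $D(Y,y)$ in case~(\ref{corounillocta})) to obtain $z\in S_Y$ and $z_i^*$ with $z_i^*(z)=1$; finally $S=w\otimes z$ gives $\Vert T_i+S\Vert\geq(1+\varepsilon)^{-2}\,z_i^*\bigl((T_i+S)w_i^*\bigr)>(2-\varepsilon)/(1+\varepsilon)^2$. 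Here the characterization theorem is used only as a tool in $X$ and $Y$, where it is a known equivalence, and the delicate tensor-norm estimate your approach requires never arises. I would recommend restructuring the argument along these lines.
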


\begin{proof}
  (\ref{mejooctaopera}).
  Let $T_1,\ldots,T_k \in S_H$ and $\varepsilon > 0$.
  For each $i$ find $y_i^* \in S_{Y^*}$ and $x_i^* \in S_{X^*}$
  such that
  \begin{equation*}
    y_i^*(T_i x_i^*) = x_i^* (T_i^*y_i^*) > 1 - \varepsilon.
  \end{equation*}
  
  Let $E = \spn\{T_i^* y_i^* : i \in \{1,\ldots,k\}\} \subset X$.
  By Theorem~\ref{charaOHlanthe} we find
  $w \in S_X$ and $w_i^* \in X^*$, $i \in \{1,\ldots,k\}$,
  such that $w_i^*(T_i^* y_i^*) = x_i^*(T_i^* y_i^*)$,
  $w_i^*(w) = 1$ and $\|w_i^*\| \le 1+\varepsilon$.

  Let $F = \spn\{T_i w_i^* : i \in \{1,\ldots,k\}\} \subset Y$.
  Using Theorem~\ref{charaOHlanthe} again we find
  $z \in S_Y$ and $z_i^* \in Y^*$, $i \in \{1,\ldots,k\}$,
  such that $z_i^*(T_i w_i^*) = y_i^*(T_i w_i^*)$,
  $z_i^*(z) = 1$ and $\|z_i^*\| \le 1 + \varepsilon$.
  
  Define $S = w \otimes z \in X \otimes Y$. We have $S \in S_H$
  and
  \begin{align*}
    \|T_i + S\|
    &\ge \frac{1}{(1+\varepsilon)^2}
      z_i^*(T_i w_i^* + Sw_i^*)
    =  \frac{1}{(1+\varepsilon)^2}
      (y_i^*(T_i w_i^*) + w_i^*(w)z_i^*(z)) \\
    &=  \frac{1}{(1+\varepsilon)^2}
      (y_i^*(T_i x_i^*) + 1)
    > \frac{2-\varepsilon}{(1+\varepsilon)^2}.
  \end{align*}
 Consequently $H$ is OH\@.
 
 (\ref{corounillocta}).
 Let $T_1,\ldots,T_k \in S_H$ and $\varepsilon > 0$.
 For each $i$ find $y_i^* \in S_{Y^*}$ and $x_i^* \in S_{X^*}$
 such that
 \begin{equation*}
    y_i^*(T_i x_i^*) = x_i^* (T_i^*y_i^*) > 1 - \varepsilon.
 \end{equation*}
 By assumption $D(Y,y) = \{y^*\in S_{Y^*}: y^*(y)=1\}$ is norming for $Y$
 so we may assume that $y_i^*(y) = 1$ for each $i\in\{1,\ldots, n\}$.
 We now proceed as in (\ref{mejooctaopera}) by finding
 $w \in S_X$ and $w_i^* \in X^*$. We can then use $z_i^* = y_i^*$ and $z = y$ to conclude the proof as above.
\end{proof}

\begin{rem}
The assumption that every operator in $H$ is weak$^*$-to-weakly continuous
is necessary in the second statement of Theorem~\ref{theoctaopera}.
Indeed,
$Z:=L(\mathcal C[0,1]^*,\ell_\infty)=L(\ell_1,\mathcal C[0,1]^{**})$
is not OH from \cite[Corollary~3.12]{blr}.
\end{rem}

Since $H = X \iten Y$ is a subspace of $L(X^*,Y)$,
Theorem~\ref{theoctaopera} says the following
about octahedrality of injective tensor products.

\begin{corollary}\label{estaoctainje}
Let $X$ and $Y$ be Banach spaces.
\begin{enumerate}
\item If both $X$ and $Y$ are OH, then $X\iten Y$ is OH\@.
\item If $X$ is OH and there exists $y\in S_Y$ such that $n(Y,y)=1$, then $X\iten Y$ is OH\@.
\end{enumerate}
\end{corollary}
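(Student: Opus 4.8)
The plan is to deduce both statements directly from Theorem~\ref{theoctaopera} by exhibiting $X \iten Y$ as an admissible subspace $H$ of $L(X^*,Y)$. First I would recall the identification, already mentioned in the excerpt, of an element $u = \sum_{i=1}^n x_i \otimes y_i \in X \iten Y$ with the finite-rank operator $T_u : X^* \to Y$ given by $T_u(x^*) = \sum_{i=1}^n x^*(x_i) y_i$, and note that this identification is isometric when $L(X^*,Y)$ carries its operator norm — this is precisely the definition of the injective norm written in operator form. The key structural facts to verify are then: (a) every such $T_u$, and hence every operator in the closure $H := X \iten Y \subseteq L(X^*,Y)$, is weak$^*$-to-weakly continuous (for finite-rank operators this is immediate since $x^* \mapsto x^*(x_i)$ is weak$^*$-continuous and a finite linear combination of fixed vectors is weakly continuous in the scalars; the property then passes to norm limits by a standard argument); (b) $H$ is a closed subspace of $L(X^*,Y)$, which holds by definition since $X \iten Y$ is the completion of $X \otimes Y$ and the embedding into $L(X^*,Y)$ is isometric, so the image is complete, hence closed; and (c) $X \otimes Y \subseteq H$, which is trivially true.

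Once these three points are in place, the hypotheses of Theorem~\ref{theoctaopera} are met with this choice of $H$. For statement (i), if both $X$ and $Y$ are OH, then Theorem~\ref{theoctaopera}(\ref{mejooctaopera}) gives that $H = X \iten Y$ is OH. For statement (ii), if $X$ is OH and there is $y \in S_Y$ with $n(Y,y) = 1$, then Theorem~\ref{theoctaopera}(\ref{corounillocta}) applies verbatim and again yields that $X \iten Y$ is OH.

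I do not anticipate a genuine obstacle here: the corollary is essentially a specialization of the theorem, and the only thing requiring any care is confirming that finite-rank operators arising from tensors are weak$^*$-to-weakly continuous and that this survives passage to the completion. That verification is routine and is in any case standard background on injective tensor products (cf.\ \cite{rya}), so it can be stated briefly. The real content has already been carried by Theorem~\ref{theoctaopera}; the corollary merely records its most important consequence. $\qed$
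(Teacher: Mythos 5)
Your proposal is correct and matches the paper exactly: the paper gives no separate proof, stating only that since $X \iten Y$ is a closed subspace of $L(X^*,Y)$ consisting of weak$^*$-to-weakly continuous operators and containing $X \otimes Y$ (facts recorded in the paper's preliminaries with reference to \cite{rya}), the corollary is an immediate specialization of Theorem~\ref{theoctaopera}. The extra verifications you spell out are the same standard background the paper implicitly invokes.
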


Let us also note the following necessary conditions
for injective tensor product spaces to be OH, WOH or LOH\@.
Proof of the below proposition follows
the ideas in \cite[Proposition~3.9]{blr} and
\cite[Theorem~3.46]{lan} and will not be included.
A Banach space has a \emph{non-rough norm}
if the dual unit ball has weak$^*$ slices of arbitrarily small diameter
(see \cite[Proposition~II.1.11]{dgz}).
\begin{proposition}
  Let $X$ and $Y$ be Banach spaces. Assume that $Y$ has a non-rough norm.
  \begin{enumerate}
  \item
    If $X \iten Y$ is LOH, then $X$ is LOH\@.
  \item
    If $X \iten Y$ is WOH, then $X$ is WOH\@.
  \item
    If $X \iten Y$ is OH, then $X$ is OH\@.
  \end{enumerate}
\end{proposition}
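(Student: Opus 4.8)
The plan is to prove the contrapositive in each case by exploiting the non-roughness of $Y$ to obtain a weak$^*$ slice of $B_{Y^*}$ of small diameter, and then to transport testing functionals on $X \iten Y$ back to functionals on $X$ alone. First I would fix $y_0^* \in S_{Y^*}$ and, using that $Y$ has a non-rough norm together with \cite[Proposition~II.1.11]{dgz}, I would choose $y_0 \in S_Y$ with $y_0^*(y_0) > 1-\delta$ so that the weak$^*$ slice $S(B_{Y^*}, y_0, \delta)$ has diameter at most $\varepsilon'$ (for parameters $\delta, \varepsilon'$ to be tuned). The key elementary observation is that the map $x \mapsto x \otimes y_0$ is an isometric embedding of $X$ into $X \iten Y$, so that given $x_1, \dots, x_n \in S_X$ the elements $x_i \otimes y_0$ lie in $S_{X \iten Y}$ and can be fed into the (L/W/)OH hypothesis for $X \iten Y$.

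Next I would run the three arguments in parallel. In the OH case, from $x_1 \otimes y_0, \dots, x_n \otimes y_0 \in S_{X\iten Y}$ and $\varepsilon > 0$ the hypothesis yields $u \in S_{X\iten Y}$ with $\|x_i \otimes y_0 + u\| > 2 - \varepsilon$ for all $i$. For each $i$ pick norming functionals $\Phi_i \in S_{(X\iten Y)^*}$ with $\Phi_i(x_i \otimes y_0 + u) > 2 - \varepsilon$; since the dual norm of the injective tensor product is dominated by the projective norm, $\Phi_i$ restricted to elementary tensors is a bounded bilinear form of norm $\le 1$, hence corresponds to an operator $X \to Y^*$ of norm $\le 1$, and evaluating at $y_0$ we get $\Phi_i(x_i \otimes y_0) = \langle S_i x_i, y_0\rangle$ where $\|S_i\| \le 1$. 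The inequality $\Phi_i(x_i \otimes y_0) + \Phi_i(u) > 2-\varepsilon$ with $\|u\|\le 1$ forces both $\Phi_i(x_i\otimes y_0) > 1 - \varepsilon$ and, crucially, $\langle S_i^* y_0, \cdot\rangle$ nearly attains its norm at $x_i$, so $S_i^* y_0^{(i)} / \|S_i^* y_0^{(i)}\|$ lies in a small weak$^*$ slice determined by $y_0$; non-roughness then pins all these functionals near a single $x^* \in S_{X^*}$. Testing $\Phi_i(u)$ against a suitable $y^* \in S_{Y^*}$ in that same slice produces $y \in S_X$ (essentially $y^*\circ T_u$, suitably normalized) with $\|x_i + y\| > 2 - \varepsilon''$, giving that $X$ is OH. The LOH case is the one-functional version of this, and the WOH case is handled by carrying the auxiliary functional $x^* \in B_{X^*}$ through as $x^* \otimes y_0^* \in B_{(X\iten Y)^*}$ and tracking the extra factor $|x^*(x_i)| + t$ in the WOH inequality.

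The main obstacle, and the reason this is not entirely routine, is controlling the "$Y$-coordinate" of the perturbation $u$: a priori $u \in X \iten Y$ is a genuine operator and its action need not factor through $y_0$, so one cannot simply read off an element of $X$. This is exactly where non-roughness of $Y$ does the work — it guarantees that the functionals witnessing near-norm-attainment of the $S_i$ at the $x_i$, which all live in a slice cut out by $y_0$, are forced to be close to one another, so that a single direction $y \in S_X$ works simultaneously for all $i$. Quantitatively one must interleave the choice of $\delta$ (governing the diameter $\varepsilon'$ of the $Y^*$-slice) with the final target $\varepsilon$ and absorb the resulting cascade of $(1+\varepsilon)$-type factors; since the statement only asks for LOH/WOH/OH of $X$ (which are themselves "for every $\varepsilon>0$" statements) this bookkeeping is harmless, and as the excerpt notes it mirrors \cite[Proposition~3.9]{blr} and \cite[Theorem~3.46]{lan}, so I would reference those and omit the routine estimates.
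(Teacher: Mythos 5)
The paper itself omits this proof (deferring to \cite[Proposition~3.9]{blr} and \cite[Theorem~3.46]{lan}), so I am judging your sketch against the standard argument. Your overall strategy --- embed $X$ into $X \iten Y$ via $x \mapsto x \otimes y_0$, where $y_0 \in S_Y$ determines a weak$^*$ slice of $B_{Y^*}$ of small diameter, apply the octahedrality hypothesis to the tensors $x_i \otimes y_0$, and recover a common direction in $X$ by a partial evaluation of the resulting perturbation $u$ --- is the right one. But the middle of your argument has a genuine gap. You choose general norming functionals $\Phi_i \in S_{(X \iten Y)^*}$ and identify them with (integral) operators $S_i \colon X \to Y^*$ of norm at most one. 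Two problems follow. First, the assertion that $S_i^* y_0$ ``lies in a small weak$^*$ slice determined by $y_0$'' is a type error: $S_i^* y_0$ belongs to $X^*$, whereas the slice cut out by $y_0$ is a subset of $B_{Y^*}$; and the subsequent claim that non-roughness ``pins all these functionals near a single $x^* \in S_{X^*}$'' is neither true nor needed --- nothing forces the $X^*$-components to cluster, and the argument does not require them to. Second, and more seriously, for a general $\Phi_i$ the value $\Phi_i(u)$ is a trace-type pairing, $\sum_j \langle S_i a_j, b_j\rangle$ for $u=\sum_j a_j\otimes b_j$, which does not factor through any single $y^* \in Y^*$; hence the step ``testing $\Phi_i(u)$ against a suitable $y^*$ in that same slice produces $y$ essentially equal to $y^* \circ T_u$'' is unjustified.

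The repair is to use the definition of the injective norm rather than abstract dual functionals: for each $i$ choose an elementary tensor $x_i^* \otimes y_i^*$ with $x_i^* \in B_{X^*}$, $y_i^* \in B_{Y^*}$ and $(x_i^* \otimes y_i^*)(x_i \otimes y_0 + u) > 2 - \eta$, where $\eta$ is smaller than the slice parameter. After replacing $(x_i^*,y_i^*)$ by $(-x_i^*,-y_i^*)$ if necessary (which leaves the functional unchanged), this forces $x_i^*(x_i) > 1 - \eta$ and $y_i^*(y_0) > 1 - \eta$, so all the $y_i^*$ lie in the chosen slice and are pairwise within $\varepsilon'$ in norm; then with $y^* := y_1^*$ and $y := (\mathrm{id}_X \otimes y^*)(u)/\Vert(\mathrm{id}_X \otimes y^*)(u)\Vert$ one gets $x_i^*(x_i + y) > 2 - 2\eta - \varepsilon'$ simultaneously for all $i$, which is the OH conclusion; the LOH and WOH cases are the one-functional and weighted variants, the latter carrying $x^*\otimes y^*$ through as you indicate. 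With this substitution your outline becomes a correct proof.
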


From octahedrality we now turn to almost squareness and
diameter two properties.
Acosta, Becerra Guerrero and Rodr\'iguez-Palacios have shown that if $Y$
is a Banach space, then $X \iten Y$
has the D2P whenever $X$ is a Banach space such that
the supremum of the dimension of the centralizer of all the even duals
of $X$ is unbounded \cite[Theorem~5.3]{abr}.
Now, we prove a stability result for
ASQ, which will provide a wide class
of injective tensor product spaces which have the SD2P\@.

\begin{theorem}\label{asqoperator}
Let $X$ and $Y$ be Banach spaces. Assume that $X$ is ASQ.
\begin{enumerate}
\item\label{item:asqop-1}
  If $H \subseteq K(Y,X)$ is a closed subspace with $Y^* \otimes X \subset H$, then $H$
  is ASQ\@.
\item\label{item:asqop-2}
  If $H \subseteq K(Y^*,X)$ is a closed subspace with $Y \otimes X \subset H$, then $H$
  is ASQ\@.
\end{enumerate}
\end{theorem}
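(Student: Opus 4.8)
The two statements have essentially identical proofs, so the plan is to carry out (\ref{item:asqop-1}) in detail and only indicate the cosmetic change for (\ref{item:asqop-2}) at the end. The guiding idea is that ASQ of $H$ can be witnessed by a \emph{single} rank-one perturbation $y^{*}\otimes x$ with $y^{*}\in S_{Y^{*}}$ fixed once and for all and $x\in X$ supplied by the ASQ property of $X$; since $X$ is fixed, one $x$ can be arranged to work simultaneously against finitely many prescribed operators $T_{1},\ldots,T_{k}\in S_{H}$, which is exactly what ASQ of $H$ asks for.

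First I would record the finitary form of ASQ to be used. Since one may take $\|y_{n}\|=1$ in the definition (replace $y_{n}$ by $y_{n}/\|y_{n}\|$), $X$ is ASQ if and only if for every $\varepsilon>0$ and every $x_{1},\ldots,x_{m}\in S_{X}$ there is $x\in S_{X}$ with $\|x_{j}\pm x\|\le 1+\varepsilon$ for all $j$. Such an $x$ automatically controls scalar multiples: if $z_{j}=\|z_{j}\|x_{j}$ with $\|z_{j}\|\le 1$, then $z_{j}\pm x=\|z_{j}\|(x_{j}\pm x)+(1-\|z_{j}\|)(\pm x)$ gives $\|z_{j}\pm x\|\le 1+\varepsilon$, and then $z_{j}+\lambda x=\tfrac{1+\lambda}{2}(z_{j}+x)+\tfrac{1-\lambda}{2}(z_{j}-x)$ gives $\|z_{j}+\lambda x\|\le 1+\varepsilon$ for every scalar $\lambda$ with $|\lambda|\le 1$.

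Next, given $T_{1},\ldots,T_{k}\in S_{H}$ and $\varepsilon\in(0,\tfrac13)$, I would use compactness: $K:=\overline{\bigcup_{i=1}^{k}T_{i}(B_{Y})}$ is a compact subset of $B_{X}$, hence has a finite $\varepsilon$-net $z_{1},\ldots,z_{m}$. Applying the reformulation above to the normalized vectors $z_{j}/\|z_{j}\|$ (those $z_{j}$ with $\|z_{j}\|\le\varepsilon$ are discarded and later handled by the trivial bound $\|z_{j}+\lambda x\|\le\varepsilon+1$) yields $x\in S_{X}$ with $\|z_{j}+\lambda x\|\le 1+\varepsilon$ for all $j$ and all $|\lambda|\le 1$. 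Fixing any $y^{*}\in S_{Y^{*}}$ (possible as $Y\neq\{0\}$) and setting $S:=y^{*}\otimes x\in Y^{*}\otimes X\subseteq H$, one has $\|S\|=1$; and for each $i$ and each $y\in B_{Y}$, picking $j$ with $\|T_{i}y-z_{j}\|\le\varepsilon$ and using $|y^{*}(y)|\le 1$,
\[
  \|T_{i}y\pm Sy\|=\|T_{i}y\pm y^{*}(y)x\|\le\|z_{j}\pm y^{*}(y)x\|+\varepsilon\le 1+2\varepsilon .
\]
Taking the supremum over $y\in B_{Y}$ gives $\|T_{i}\pm S\|\le 1+2\varepsilon$, and since $\|S\|=1$ also $\|T_{i}\pm S\|\ge 2\|S\|-\|T_{i}\mp S\|\ge 1-2\varepsilon$. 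Running this with $\varepsilon=1/n$ produces $S_{n}\in S_{H}$ with $\|T_{i}\pm S_{n}\|\to 1$ for all $i$, so $H$ is ASQ. For (\ref{item:asqop-2}) one repeats the argument with $Y^{*}$ in place of $Y$ and with $S:=y\otimes x$ for a fixed $y\in S_{Y}$, using $|y^{*}(y)|\le 1$ for $y^{*}\in B_{Y^{*}}$.

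The one step requiring care is the passage from pointwise to operator-norm estimates: the perturbation $y^{*}\otimes x$ must be simultaneously almost invisible to all the $T_{i}$ at \emph{every} point of $B_{Y}$, and it is precisely compactness of the $T_{i}$ that lets us replace this infinite demand by a finite $\varepsilon$-net in $X$ and then call on ASQ of $X$ a single time. The remaining ingredients — the convexity manipulations and the lower bound $\|T_{i}\pm S_{n}\|\ge 1-2\varepsilon$ — are routine.
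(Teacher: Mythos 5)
Your proposal is correct and follows essentially the same route as the paper: use compactness of the $T_i$ to reduce to a finite $\varepsilon$-net in $B_X$, invoke ASQ of $X$ once to get a single $x\in S_X$ that is almost orthogonal to the whole net (the paper cites the finite-dimensional-subspace form of ASQ where you derive the needed estimate for scalar multiples by a short convexity argument, a negligible difference), and then perturb by the rank-one operator $y^*\otimes x$ (resp.\ $y\otimes x$). The estimates and the conclusion match the paper's proof.
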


\begin{proof}
We will prove only (\ref{item:asqop-2}).
The proof of (\ref{item:asqop-1}) is similar.

Let $T_1,\ldots,T_k \in S_H$ and $\varepsilon > 0$.
The set
\begin{equation*}
  K = \cup_{i=1}^k T_i(B_{Y^*})
\end{equation*}
is a relatively compact subset of $B_X$ hence
there exists $x_1,\ldots,x_n \in B_X$ such that
$K \subset \cup_{i=1}^n B(x_i,\varepsilon)$.
Define
\begin{equation*}
  E:=\spn\{x_{i}: i\in\{1,\ldots, n\}\}.
\end{equation*}
As $E$ is a finite-dimensional subspace of $X$ and $X$ is ASQ there exists $z\in S_X$ such that
\begin{equation*}
  \Vert e+\lambda z\Vert \leq
  (1+\varepsilon) \max\{ \Vert e\Vert, \vert \lambda\vert \}
\end{equation*}
for all $e\in E$ and $\lambda \in \mathbb{R}$ by \cite[Theorem~2.4]{all}.
Pick $y\in S_Y$, and define $S:=z \otimes y \in S_H$.
For each $T_i$ and $y^* \in B_{Y^*}$ there exists
$j$ such that $\|T_iy^* - x_j\| < \varepsilon$,
so
\begin{align*}
  \|(T_i + S)y^*\| &\le \|T_i y^* - x_j\|
  + \|x_j + y^*(y)z\| \\
  &\le \varepsilon + (1+\varepsilon) \max\{ \|x_j\|,|y^*(y)| \}
  \le 1+ 2\varepsilon.
\end{align*}
Taking supremum over all such $y^*$
we get $\|T_i + S\| \le 1+ 2\varepsilon$,
which means that $H$ is ASQ \cite[Proposition~2.1]{all}.
\end{proof}

\begin{example}
The converse of Theorem~\ref{asqoperator} does not hold.
Indeed, given $1< p\leq q<\infty$ then $K(\ell_p,\ell_q)$
is an $M$-ideal in its bidual $L(\ell_p,\ell_q)$ \cite[Example~VI.4.1]{hww} and, consequently,
it is ASQ \cite[Corollary~4.3]{all}.
Thus, $K(Y,X)$ can be ASQ even if both $X$ and $Y$ are uniformly convex.
\end{example}

\begin{corollary}\label{estasqinject}
Let $X$ and $Y$ be Banach spaces. If $X$ is ASQ, then $X\iten Y$ is ASQ\@. In
particular, $X \iten Y$ has the SD2P\@.
\end{corollary}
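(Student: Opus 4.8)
The plan is to obtain Corollary~\ref{estasqinject} as an immediate consequence of Theorem~\ref{asqoperator}(\ref{item:asqop-2}), so the work reduces to fitting $X \iten Y$ into the right operator picture. First I would recall that, just as every $u = \sum_{i=1}^n x_i \otimes y_i$ in $X \iten Y$ defines an operator $T_u : X^* \to Y$, by symmetry it also defines the operator $S_u : Y^* \to X$ given by $S_u(y^*) = \sum_{i=1}^n y^*(y_i)x_i$, and that the injective norm of $u$ equals the operator norm of $S_u$ (see \cite{rya}). Hence $X \iten Y$ is exactly the closure in operator norm of the finite-rank operators from $Y^*$ to $X$; in particular it is (isometrically) a closed subspace $H$ of $K(Y^*,X)$.

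Next I would check that $H$ contains $Y \otimes X$: the elementary tensor $y \otimes x$ corresponds under $S_{(\cdot)}$ to the rank-one operator $y^* \mapsto y^*(y)x$, and since $X \otimes Y = Y \otimes X$ as sets of finite-rank operators and $X \otimes Y$ is dense in $X \iten Y$, we indeed have $Y \otimes X \subseteq H$. At this point both hypotheses of Theorem~\ref{asqoperator}(\ref{item:asqop-2}) are met --- $X$ is ASQ and $Y \otimes X \subseteq H \subseteq K(Y^*,X)$ with $H$ closed --- so the theorem yields that $H$, that is $X \iten Y$, is ASQ. For the final assertion I would simply invoke the implication recalled in the introduction that ASQ implies the SD2P (see \cite{all}), giving that $X \iten Y$ has the SD2P.

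The proof is essentially bookkeeping, so there is no serious obstacle; the only point deserving explicit care is the first step, namely verifying that the map $u \mapsto S_u$ really identifies $X \iten Y$ isometrically with a closed subspace of $K(Y^*,X)$ containing every rank-one operator $y^* \mapsto y^*(y)x$, since it is precisely this identification that makes Theorem~\ref{asqoperator}(\ref{item:asqop-2}) applicable.
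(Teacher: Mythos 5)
Your proposal is correct and matches the paper's intended argument: the paper states Corollary~\ref{estasqinject} without proof precisely because $X \iten Y$ identifies isometrically with a closed subspace of $K(Y^*,X)$ containing $Y \otimes X$, so Theorem~\ref{asqoperator}~(\ref{item:asqop-2}) applies directly, and ASQ implies the SD2P by \cite{all}. The only mild imprecision is your phrase ``the closure of the finite-rank operators from $Y^*$ to $X$'' --- it is the closure of those finite-rank operators coming from $Y\otimes X$ (equivalently, the weak$^*$-to-norm continuous ones) --- but this does not affect the argument.
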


Using the above corollary we can characterize the $\mathcal C(K,X)$ spaces which are ASQ\@.

\begin{corollary}\label{caraoctacontivector}
Let $K$ be compact Hausdorff space and $X$ be a Banach space.
The following assertions are equivalent:
\begin{enumerate}
\item\label{item:CKASQ-1} $\mathcal C(K,X)$ is ASQ\@.
\item\label{item:CKASQ-2} $X$ is ASQ\@.
\end{enumerate}

\end{corollary}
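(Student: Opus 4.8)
The plan is to identify $\mathcal C(K,X)$ with the injective tensor product $\mathcal C(K) \iten X$ and then split the proof into the two implications, of which the forward direction is the genuine content since the backward direction is immediate from the stability result.

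For the implication \eqref{item:CKASQ-2}$\Rightarrow$\eqref{item:CKASQ-1}: recall the classical identification $\mathcal C(K,X) = \mathcal C(K) \iten X$ (see \cite{rya}). Since $X$ is ASQ by hypothesis, Corollary~\ref{estasqinject} applied with the roles ``$X$'' $= X$ and ``$Y$'' $= \mathcal C(K)$ gives that $X \iten \mathcal C(K)$ is ASQ, and injective tensor products are commutative, so $\mathcal C(K) \iten X = \mathcal C(K,X)$ is ASQ. This is a one-line argument.

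For the implication \eqref{item:CKASQ-1}$\Rightarrow$\eqref{item:CKASQ-2}, the natural approach is to show that $X$ embeds isometrically into $\mathcal C(K,X)$ as a one-complemented subspace in a way that respects the ASQ structure, and then invoke the fact that ASQ passes to suitable subspaces. Concretely, fix $t_0 \in K$ and consider the evaluation map $\delta_{t_0}\colon \mathcal C(K,X)\to X$, $f\mapsto f(t_0)$, which is a norm-one projection when composed with the isometric embedding $X\hookrightarrow \mathcal C(K,X)$ sending $x$ to the constant function. The cleanest way is to argue directly: given $x_1,\dots,x_k \in S_X$, view them as constant functions in $S_{\mathcal C(K,X)}$, apply ASQ of $\mathcal C(K,X)$ to obtain a sequence $\{F_n\}$ in $B_{\mathcal C(K,X)}$ with $\|F_n\|\to 1$ and $\|x_i \pm F_n\|_{\mathcal C(K,X)}\to 1$, and then set $y_n := F_n(t_0) \in B_X$. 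Since $\|x_i \pm F_n\|_\infty = \sup_{t\in K}\|x_i \pm F_n(t)\| \ge \|x_i \pm y_n\|$, we get $\limsup_n \|x_i \pm y_n\| \le 1$, hence $\to 1$; similarly one needs $\|y_n\|\to 1$. The subtlety is that $\|F_n(t_0)\|$ need not tend to $1$ merely because $\|F_n\|_\infty \to 1$.

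The main obstacle is precisely this last point: ensuring $\|y_n\| = \|F_n(t_0)\| \to 1$. I expect to resolve it by choosing the point of evaluation adaptively — for each $n$ pick $t_n \in K$ with $\|F_n(t_n)\| > \|F_n\|_\infty - 1/n$ — and replacing $y_n$ by $F_n(t_n)$; one must then recheck that $\|x_i \pm y_n\| \le \|x_i \pm F_n\|_\infty$ still holds, which it does since $x_i$ is the \emph{constant} function with value $x_i$, so $\|x_i \pm F_n(t_n)\| = \|(x_i \pm F_n)(t_n)\| \le \|x_i \pm F_n\|_\infty$. Thus both defining conditions for ASQ of $X$ are met. (Alternatively, one may simply quote that a one-complemented subspace through a norm-one projection whose kernel is ``rich enough'' inherits ASQ, but the direct evaluation argument above is self-contained and avoids any general subspace lemma.)
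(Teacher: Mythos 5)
Your proposal is correct and follows essentially the same route as the paper: the backward implication is the identification $\mathcal C(K,X)=\mathcal C(K)\iten X$ together with Corollary~\ref{estasqinject}, and the forward implication views $x_1,\dots,x_k$ as constant functions and evaluates the resulting almost-square witness at a point of $K$ where its norm is (nearly) attained. The only cosmetic difference is that the paper picks $t\in K$ with $\Vert f(t)\Vert=1$ exactly (using compactness of $K$), whereas you take an approximately norm-attaining sequence $t_n$; both work.
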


\begin{proof}
(\ref{item:CKASQ-1}) $\Rightarrow$ (\ref{item:CKASQ-2}).
Pick $x_1,\ldots, x_n\in S_X$ and $\varepsilon>0$,
and let us find $x\in S_X$ such that $\Vert x_i\pm x\Vert\leq 1+\varepsilon$
for all $i\in\{1,\ldots, n\}$. For each $i\in\{1,\ldots, n\}$ define
\begin{equation*}
  f_i(t)=x_i \text{ for all } t\in K,
\end{equation*}
which is an element of $S_{\mathcal C(K,X)}$.
Since $\mathcal C(K,X)$ is ASQ there exists
$f\in S_{\mathcal C(K,X)}$ such that $\Vert f_i\pm f\Vert\leq 1+\varepsilon$
for all $i\in\{1,\ldots, n\}$. Pick $t\in K$ such that $f(t)\in S_X$, and
define $x:=f(t)$. Now
\begin{equation*}
   \Vert x_i\pm x\Vert =
   \Vert f_i(t)\pm f(t)\Vert
   \leq
   \Vert f_i\pm f\Vert\leq 1+\varepsilon
   \text{ for all } i\in\{1,\ldots,n\}.
\end{equation*}
So $X$ is ASQ\@.

(\ref{item:CKASQ-2}) $\Rightarrow$ (\ref{item:CKASQ-1}).
As $\mathcal C(K,X)= \mathcal C(K) \iten X$, we get the desired result from Corollary~\ref{estasqinject}.
\end{proof}

Next we turn to projective tensor products.
Given two Banach spaces $X$ and $Y$, recall that the
\textit{projective tensor product} of $X$ and $Y$, denoted by
$X\pten Y$, is the completion of $X\otimes Y$ under the norm given by
\begin{equation*}
   \Vert u \Vert :=
   \inf\left\{
      \sum_{i=1}^n  \Vert x_i\Vert\Vert y_i\Vert
      : u=\sum_{i=1}^n x_i\otimes y_i
      \right\}.
\end{equation*}
It is known that $B_{X\pten Y}=\overline{\conv}(B_X\otimes B_Y)
=\overline{\conv}(S_X\otimes S_Y)$ \cite[Proposition~2.2]{rya}.
Moreover, given Banach spaces $X$ and $Y$, it is well known that
$(X\pten Y)^*=L(X,Y^*)$ (see \cite{rya} for background).

From Corollary~\ref{estasqinject} we get examples of
projective tensor products that are octahedral.

\begin{corollary}\label{octaprojec}

Let $X$ and $Y$ be Banach spaces. Assume that $X$ is ASQ and $Y$ is Asplund.
If either $X^*$ or $Y^*$ has the approximation property,
then $X^* \pten Y^*$ is OH\@.

\end{corollary}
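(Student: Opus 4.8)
The plan is to combine Corollary~\ref{estasqinject} with the known duality between octahedrality and the strong diameter two property. First I would observe that $X \iten Y$ is ASQ by Corollary~\ref{estasqinject}, hence it has the SD2P. The key is then to identify the dual of $X \iten Y$: under the stated hypotheses (one of $X^*$, $Y^*$ has the approximation property, and $Y$ is Asplund), we have $(X \iten Y)^* = X^* \pten Y^*$. Indeed, when $Y$ is Asplund and either $X^*$ or $Y^*$ has the approximation property, the classical result (see \cite{rya}) gives $(X \iten Y)^* = X^* \widehat{\otimes}_\pi Y^*$ isometrically, since the integral operators from $X$ to $Y^*$ coincide with the nuclear operators and these are represented by $X^* \pten Y^*$.

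Next I would invoke the duality recalled in the introduction: a Banach space $Z$ has the SD2P if and only if $Z^*$ is OH (cf.\ \cite{hlp}). Applying this with $Z = X \iten Y$, which has the SD2P, we conclude that $Z^* = X^* \pten Y^*$ is OH, which is exactly the assertion.

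The main obstacle, and the only place where the hypotheses are genuinely used beyond the ASQ conclusion, is the isometric identification $(X \iten Y)^* = X^* \pten Y^*$. One must be careful that this is an isometry and not merely an isomorphism, since octahedrality is not an isomorphic invariant; this is where the approximation property on $X^*$ or $Y^*$ enters, guaranteeing that every integral operator is nuclear with matching norms, and the Asplundness of $Y$ (equivalently, $Y^*$ having the Radon--Nikod\'ym property) ensures the representation of nuclear operators is faithful. Once this identification is in place, the argument is immediate from Corollary~\ref{estasqinject} and the $\mathrm{SD2P}$--$\mathrm{OH}$ duality, requiring no further computation.
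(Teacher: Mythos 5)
Your argument is correct and is essentially the paper's own proof: the authors likewise invoke the isometric identification $(X \iten Y)^* = X^* \pten Y^*$ (citing \cite[Theorem~5.33]{rya}, which uses exactly the Asplund and approximation property hypotheses you describe) and then apply Corollary~\ref{estasqinject} together with the SD2P--OH duality. No gaps; your additional remarks on why the identification must be isometric are a sensible elaboration of the same route.
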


\begin{proof}
By assumption, we have $X^* \pten Y^*=(X \iten Y)^*$
(cf.\ e.g.\ \cite[Theorem~5.33]{rya}).
The result follows from
Corollary~\ref{estasqinject}.
\end{proof}

As noted in the Introduction, if a Banach space is LASQ, then it has the LD2P\@.
Given two Banach spaces $X$ and $Y$
it is known that $X\pten Y$ has the LD2P whenever $X$ has the LD2P
(see for example \cite[Theorem~2.7]{aln}).
Furthermore, $X \pten Y$ has the SD2P
whenever both $X$ and $Y$ have the SD2P \cite[Corollary~3.6]{blr}.
The next proposition gives us
examples of projective tensor product spaces which are LASQ\@.

\begin{proposition}\label{tensorc0ASQ}
  If $X$ is a Banach space, then $c_0 \pten X$ is LASQ\@.
  Moreover, $c_0 \pten X$ has the D2P\@.
\end{proposition}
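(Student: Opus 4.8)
The plan is to prove that $c_0 \pten X$ is LASQ directly from the definition: given a norm-one element $u \in c_0 \pten X$, we must produce a sequence $\{v_n\}$ in $B_{c_0 \pten X}$ with $\|u \pm v_n\| \to 1$ and $\|v_n\| \to 1$. The key structural fact I would exploit is that $c_0 \otimes X$ is dense in $c_0 \pten X$ and that an element of $c_0 \otimes X$ is (up to small perturbation) supported on finitely many coordinates of $c_0$; more precisely, writing $\{e_k\}$ for the canonical basis of $c_0$, any $u$ can be approximated in norm by an element of $\spn\{e_1, \dots, e_N\} \otimes X$, which we may identify with an element of $\ell_\infty^N \pten X = \ell_\infty^N(X)$, i.e.\ an $N$-tuple $(x_1, \dots, x_N)$ with $\max_k \|x_k\| \le \|u\| + \varepsilon$. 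So the first step is to fix $\varepsilon > 0$, replace $u$ by such a finitely supported approximant $u'$ with $\|u - u'\| < \varepsilon$, and note that it suffices (by a standard $\varepsilon$-argument, cf.\ the characterizations in \cite{all}) to handle $u'$.

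The second step is the construction of $v_n$. Pick any $x_0 \in S_X$ and set $v_n := e_{N+n} \otimes x_0$, which lies in $S_{c_0 \pten X}$ since $\|e_{N+n} \otimes x_0\| = \|e_{N+n}\|\,\|x_0\| = 1$ (the projective norm of an elementary tensor is the product of the norms). Because $u'$ is supported on coordinates $1, \dots, N$ and $v_n$ on coordinate $N+n > N$, these live on disjoint blocks of $c_0$; using the isometric identification of the relevant subspace with $\ell_\infty^{N+1}(X) = (\ell_\infty^N(X)) \oplus_\infty X$, we get
\begin{equation*}
  \|u' \pm v_n\| = \max\{\|u'\|,\ \|x_0\|\} = \max\{\|u'\|, 1\}.
\end{equation*}
Since $1 - \varepsilon \le \|u'\| \le 1 + \varepsilon$, this gives $\|u' \pm v_n\|$ within $\varepsilon$ of $1$, and combined with $\|u - u'\| < \varepsilon$ we conclude $\limsup_n \|u \pm v_n\| \le 1 + 2\varepsilon$; the lower bound $\|u \pm v_n\| \ge \|u\| - \|v_n\| \cdot 0$... more carefully, $\|u + v_n\| + \|u - v_n\| \ge 2\|v_n\| = 2$ forces $\max\{\|u+v_n\|,\|u-v_n\|\} \ge 1$, and symmetry of the construction in $\pm$ together with letting $\varepsilon \to 0$ along a diagonal sequence yields $\|u \pm v_n\| \to 1$. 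Also $\|v_n\| = 1$ for all $n$. Hence $c_0 \pten X$ is LASQ, and therefore has the LD2P.

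For the "moreover" part, upgrading LD2P to D2P, the natural route is to observe that the sequence $v_n = e_{N+n} \otimes x_0$ is weakly null in $c_0 \pten X$: indeed $e_{N+n} \to 0$ weakly in $c_0$, and for any $\Phi \in (c_0 \pten X)^* = L(c_0, X^*)$ we have $\Phi(e_{N+n} \otimes x_0) = (\Phi e_{N+n})(x_0) \to 0$ since $\Phi$ is weak-to-weak$^*$ continuous (being bounded linear) — actually one needs $\Phi e_{N+n} \to 0$ weak$^*$ in $X^*$, which holds because $\{\Phi e_k\}$ is the image of a weakly null sequence. Thus the approximating sequence witnessing LASQ is in fact weakly null, so $c_0 \pten X$ is WASQ, and WASQ implies the D2P by the results quoted in the Introduction. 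The main obstacle I anticipate is the bookkeeping in the first step — making the "finitely supported approximation followed by disjoint shift" rigorous when $u$ is a genuine (possibly non-finite) element of the completion $c_0 \pten X$ — but this is routine given the density of $c_0 \otimes X$ and the block structure of $c_0$; no deep input beyond \cite[Proposition~2.2]{rya} and the ASQ/LASQ reformulations of \cite{all} is needed.
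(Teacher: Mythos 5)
Your construction has a fatal flaw at its core: the identification of $\spn\{e_1,\dots,e_N\}\otimes X$ inside $c_0\pten X$ with $\ell_\infty^N(X)$ is false. That identification is valid for the \emph{injective} tensor product ($\ell_\infty^N\iten X=\ell_\infty^N(X)$), but the projective norm on $\ell_\infty^N\otimes X$ is in general strictly larger than the sup of the norms of the second legs. Consequently your key identity $\|u'\pm v_n\|=\max\{\|u'\|,\|x_0\|\}$ for disjointly supported blocks fails. Concretely, take $X=\ell_1^2$, $u'=e_1\otimes\delta_1$ and $v=e_2\otimes\delta_2$: the operator $T\in L(c_0,\ell_\infty^2)=(c_0\pten\ell_1^2)^*$ with $Te_1=(1,0)$, $Te_2=(0,1)$ and $Te_k=0$ otherwise has norm one and $\langle T,u'+v\rangle=2$, so $\|u'+v\|=2$, not $\max\{\|u'\|,\|v\|\}=1$. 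With the max formula gone, the LASQ estimate and the subsequent WASQ/D2P upgrade both collapse. A further warning sign: your $v_n=e_{N+n}\otimes x_0$ does not depend on $u'$, so the same argument would prove that $c_0\pten X$ is ASQ for every $X$; but the remark following Proposition~\ref{ASQtensorproLASQ} points out that it is not even known whether $X\pten Y$ can ever be ASQ when $\dim Y\geq 2$.

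The actual proof circumvents exactly this obstruction. One approximates $u$ by a convex combination $v=\sum_i\lambda_i z_i\otimes x_i$ of norm-one elementary tensors with each $z_i$ finitely supported, and takes as companion $w=\sum_i\lambda_i\tilde z_i\otimes x_i$, where $\tilde z_i$ is a disjointly supported translate of $z_i$ in $c_0$ and, crucially, the \emph{same} $x_i$ is reused. Then $v\pm w=\sum_i\lambda_i(z_i\pm\tilde z_i)\otimes x_i$ is again a combination of elementary tensors, and the disjointness is exploited inside $c_0$ --- where $\|z_i\pm\tilde z_i\|=\max\{\|z_i\|,\|\tilde z_i\|\}=1$ genuinely holds --- rather than in the tensor product; the triangle inequality for the projective norm then gives $\|v\pm w\|\leq 1$, while a coordinate-permutation isometry of $c_0$ gives $\|w\|=\|v\|\geq 1-\varepsilon$. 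For the D2P the paper does not pass through WASQ (which is not claimed): it shows directly that, for finitely many functionals $T_j\in L(X,\ell_1)$, pushing the translate far enough out makes $\langle w,T_j\rangle$ as small as desired, so that $v+w$ and $v-w$ both lie in a prescribed relatively weakly open subset of the ball while $\|(v+w)-(v-w)\|=2\|w\|\geq 2-2\varepsilon$. Your idea of verifying weak nullity of the witnesses is in the right spirit for this second part, but it has no correct first part to rest on.
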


\begin{proof}
  Let $Y := c_0 \pten X$.
  Let $u \in S_Y$ and $\varepsilon > 0$.
  Since $\conv(S_{c_0}\otimes S_X)$ is dense in $B_Y$
  we can find
  $v = \sum_{i=1}^n \lambda_i z_i\otimes x_i$
  such that $\|u-v\| \le \varepsilon$,
  where $z_i\in S_{c_0}$, $x_i\in S_X$
  and $\lambda_i \ge 0$ for each $i\in\{1,\ldots, n\}$
  with $\sum_{i=1}^n \lambda_i=1$.
  In particular, $\|v\| \ge 1 - \varepsilon$.
  We may assume that $z_1,\ldots, z_n$ have finite support,
  so we can find $m\in\mathbb N$ such that
  $n\geq m$ implies $z_i(n)=0$.
  Define
  \begin{equation*}
    \tilde{z}_i := \sum_{j=1}^{m-1} z_i(j)e_{m+j}
    \quad \text{and} \quad
    w := \sum_{i=1}^n \lambda_i \tilde{z}_i \otimes x_i.
  \end{equation*}
  We have
  \begin{equation*}
    \Vert v \pm w \Vert
    \leq \sum_{i=1}^n \lambda_i
    \left\Vert z_i \pm \tilde{z}_i \right\Vert
    \Vert x_i \Vert
    \leq 1
  \end{equation*}
  because $\|z_i \pm \tilde{z}_i\| = \max\{\|z_i\|,\|\tilde{z}_i\|\} = 1$
  since $z_i$ and $\tilde{z}_i$ have disjoint support
  for each $i\in\{1,\ldots, n\}$.
  Let $\theta$ be a permutation of $\mathbb{N}$ that
  swaps $k$ and $m+k$ for $k \in \{1,\ldots,m-1\}$.
  Let $\Phi_\theta$ be the isometry on $c_0$ defined by $\theta$.
  We have $\Phi_\theta(v)=w$ and if $T \in L(X,\ell_1) = Y^*$, then
  \begin{equation*}
    \langle \Phi_\theta^* T, v \rangle =
    \langle T, \Phi_\theta v\rangle = \langle T, w \rangle.
  \end{equation*}
  It follows that $\|w\| = \|v\|$
  and $\left \|u \pm \frac{w}{\|w\|}\right \| \le 1 + 2\varepsilon$,
  hence $Y$ is LASQ \cite[Proposition~2.1]{all}.
  
  Finally, let us prove that Y has the D2P\@. Let $u_0 \in B_Y$, $T_1,\ldots,T_k \in Y^*$
  and $\alpha > 0$. Consider the relatively weakly open neighborhood
  \begin{equation*}
    \mathcal{U} = \{
    y \in B_Y : |\langle y-u_0, T_j \rangle| < \alpha,
    \; j \in \{1,\ldots,k\}
    \}.
  \end{equation*}
  Then $\mathcal{U} \cap S_Y \neq \emptyset$.
  Let $u$ from the first part of the proof be in $\mathcal{U}$.
  With $\varepsilon$ small enough we may assume that $v$ is also
  in $\mathcal{U}$. Now $\{T_j x_i\}$, $i \in \{1,\ldots,n\}$
  and $j \in \{1,\ldots,k\}$, is a finite set of elements
  in $\ell_1$ so, replacing $e_{m+j}$ with $e_{N+j}$
  for $N$ big enough in the definition of $\tilde{z}_i$,
  we can make
  \begin{equation*}
    \langle w, T_j \rangle
    = \sum_{i=1}^n \lambda_i \langle \tilde{z}_i, T_j x_i\rangle
  \end{equation*}
  as small as we wish. Consequently, we may assume that $v \pm w \in \mathcal{U}$.
  Since
  \begin{equation*}
    \|v + w - (v-w)\| = \|2w\| \geq 2 - 2\varepsilon,
  \end{equation*}
  we conclude that $Y$ has the D2P\@.
\end{proof}

\begin{proposition}\label{ASQtensorproLASQ}
  Let $X$ and $Y$ be Banach spaces.
  Assume that there exists $f\in S_{Y^*}$ such that $n(Y^*,f)=1$.
  If $X$ is ASQ, then $X \pten Y$ is LASQ\@.
\end{proposition}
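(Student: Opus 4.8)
The plan is to use the characterisation of the local almost square property in \cite[Proposition~2.1]{all}: it suffices to show that given $u\in S_{X\pten Y}$ and $\varepsilon>0$ there is $w\in S_{X\pten Y}$ with $\Vert u\pm w\Vert\le 1+\varepsilon$.

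The main step is to turn $n(Y^*,f)=1$ into a convenient description of $B_{X\pten Y}$. Fix $\delta>0$ and set $F_\delta:=\{y\in S_Y:f(y)>1-\delta\}$, which is non-empty since $\Vert f\Vert=1$. I would first check that $F_\delta$ is norming for $Y^*$: given $g\in S_{Y^*}$ and $\eta\in(0,\delta)$, since $D(Y^*,f)$ is norming for $Y^*$ pick $\Phi\in D(Y^*,f)$ with $|\Phi(g)|>1-\eta$, and assume $\Phi(g)>1-\eta$ (replace $g$ by $-g$ if needed); by Goldstine's theorem there is $y\in B_Y$ with $|f(y)-1|<\eta$ and $|g(y)-\Phi(g)|<\eta$, and then $\Vert y\Vert\ge f(y)>1-\eta$, so $\tilde{y}:=y/\Vert y\Vert\in F_\delta$ and $g(\tilde{y})\ge g(y)>1-2\eta$; as $\eta\to 0$ this gives $\sup_{y\in F_\delta}|g(y)|=\Vert g\Vert$. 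Since $F_\delta\cup(-F_\delta)$ is therefore a symmetric norming subset of $B_Y$, its closed convex hull is $B_Y$, and consequently
\begin{equation*}
  B_{X\pten Y}=\overline{\conv}(S_X\otimes S_Y)=\overline{\conv}\big(S_X\otimes F_\delta\big),
\end{equation*}
where the last equality also uses $a\otimes(-b)=(-a)\otimes b$.

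Next I would combine this with almost squareness of $X$. Fix $u\in S_{X\pten Y}$ and $\varepsilon>0$, and take $\delta=\varepsilon$. By the previous step choose a convex combination $v=\sum_{i=1}^n\lambda_i\,a_i\otimes b_i$ with $a_i\in S_X$, $b_i\in F_\varepsilon$, $\lambda_i>0$, $\sum_{i=1}^n\lambda_i=1$ and $\Vert u-v\Vert<\varepsilon$; in particular $f(b_i)>1-\varepsilon$ for all $i$. Put $E:=\spn\{a_1,\dots,a_n\}$. Since $X$ is ASQ, \cite[Theorem~2.4]{all} provides $z\in S_X$ with $\Vert e+\lambda z\Vert\le(1+\varepsilon)\max\{\Vert e\Vert,|\lambda|\}$ for all $e\in E$ and $\lambda\in\mathbb{R}$; in particular $\Vert a_i\pm z\Vert\le 1+\varepsilon$. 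Set $w:=z\otimes(\sum_{i=1}^n\lambda_i b_i)$. Writing $v\pm w=\sum_{i=1}^n\lambda_i\,(a_i\pm z)\otimes b_i$ one gets
\begin{equation*}
  \Vert v\pm w\Vert\le\sum_{i=1}^n\lambda_i\,\Vert a_i\pm z\Vert\,\Vert b_i\Vert\le 1+\varepsilon,
  \qquad
  \Vert w\Vert=\big\Vert\sum_{i=1}^n\lambda_i b_i\big\Vert\ge f\big(\sum_{i=1}^n\lambda_i b_i\big)=\sum_{i=1}^n\lambda_i f(b_i)>1-\varepsilon .
\end{equation*}
Thus $\Vert u\pm w\Vert<1+2\varepsilon$ and $\Vert w\Vert>1-\varepsilon$; replacing $w$ by $w':=w/\Vert w\Vert\in S_{X\pten Y}$ changes $\Vert u\pm w\Vert$ by at most $1-\Vert w\Vert<\varepsilon$, so $\Vert u\pm w'\Vert<1+3\varepsilon$. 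As $\varepsilon$ was arbitrary, \cite[Proposition~2.1]{all} gives that $X\pten Y$ is LASQ.

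I expect the only real obstacle to be the first step: passing from $n(Y^*,f)=1$, a statement about $Y^{**}$, to the fact that the almost-$f$-attaining part $F_\delta$ of $S_Y$ is norming for $Y^*$. Everything after that is formal: this norming property lets one represent $u$ by elementary tensors whose $Y$-factors almost attain $f$, which is precisely what keeps the norm of the marginal $\sum_i\lambda_i b_i$ — the element that has to be multiplied by the almost-square direction $z$ in order to bound $\Vert v\pm w\Vert$ — close to $1$; the estimates themselves are routine.
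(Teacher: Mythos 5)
Your proof is correct and follows essentially the same route as the paper: represent $u$ approximately by a convex combination of elementary tensors whose $Y$-factors (almost) attain $f$, apply the finite-dimensional ASQ criterion of \cite[Theorem~2.4]{all} to the $X$-factors, and tensor the resulting almost-square direction with the $Y$-marginal $\sum_i\lambda_i b_i$, whose norm stays close to $1$ precisely because of $f$. The only difference is that you prove the norming/face step directly via Goldstine's theorem (and then normalize $w$, picking up a harmless extra $\varepsilon$), whereas the paper invokes \cite[Corollary~3.5]{abr} to take $f(y_i)=1$ exactly.
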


\begin{proof}
  Denote by $Z := X \pten Y$.
  Let $u \in S_Z$ and $\varepsilon > 0$.
  Since $\conv(S_X \otimes S_Y)$ is dense in $S_Z$
  we can find
  $v = \sum_{i=1}^n \lambda_i x_i\otimes y_i$
  such that $\|u-v\| \le \varepsilon$, where $x_i\in S_X, y_i\in S_Y$
  and $\lambda_i \ge 0$ for each $i\in\{1,\ldots, n\}$
  with $\sum_{i=1}^n \lambda_i=1$.
  Since $n(Y^*,f)=1$ we may assume (see \cite[Corollary~3.5]{abr})
  that $f(y_i)=1$ for each $i\in\{1,\ldots, n\}$, and hence
  $\left\Vert \sum_{i=1}^n \lambda_i y_i\right\Vert=1$.

  As $X$ is ASQ we can find $x\in S_X$ such that
  $\Vert x_i\pm x\Vert\leq 1+\varepsilon$
  for all $i\in\{1,\ldots, n\}$.
  Define $z:=x\otimes \sum_{i=1}^n \lambda_i y_i$, which is a norm one element.
  Now
  \begin{equation*}
    \| u\pm z\| \leq \|u - v\| + \|v \pm z\|
    \leq
    \varepsilon
    +
    \sum_{i=1}^n \lambda_i \Vert x_i \pm x \Vert \Vert y_i \Vert
    < 1 + 2\varepsilon.
  \end{equation*}
  We conclude that $Z$ is LASQ \cite[Proposition~2.1]{all}.
\end{proof}

\begin{rem}
From \cite[Theorem~3.1]{blr} it is immediate that
$X \pten Y$ has the SD2P in the above proposition.
However, we can not conclude that $X \pten Y$ is ASQ\@.
Indeed, let $X$ be ASQ and $Y=\ell_1^n$.
Then $X\pten Y=\ell_1^n(X)$ is not ASQ \cite[Lemma~5.5]{all}.
Given a Banach space $X$
it is not even clear whether $X\pten Y$ can be ASQ
for any Banach space $Y$ with $\dim(Y) \geq 2$.
\end{rem}

\section{Symmetric tensor product spaces}
\label{symtenprod}
Given $N \in \mathbb{N}$,
the \emph{($N$-fold) symmetric tensor product}, $\otimes^{s,N}X$, of a Banach space $X$
is the linear span of the tensors $x^N := x \otimes \cdots \otimes x$
in the $N$-fold tensor product $\otimes^N X$.
The \emph{($N$-fold) injective symmetric tensor product} of $X$,
denoted by $\widehat{\otimes}_{\varepsilon,s,N} X$,
is the completion of the space $\otimes^{s,N}X$
under the norm
\begin{equation*}
    \Vert u\Vert := \sup
    \left\{\left\vert
    \sum_{i=1}^n  x^*(x_i)^N\right\vert : x^*\in B_{X^*}
    \right\},
\end{equation*}
where $u:=\sum_{i=1}^n x_i^N$ (cf.\ e.g.\ \cite{flo}).
Note that $(\widehat{\otimes}_{\varepsilon,s,N} X)^*
= \mathcal P_I(^N X)$, the Banach space of $N$-homogeneous integral
polynomials on $X$.

We have seen in Theorem~\ref{theoctaopera}~(\ref{mejooctaopera}) that octahedrality is stable for non-symmetric injective tensor products. Now we shall establish the analogous result for the symmetric case.

\begin{theorem}\label{estaoctainjesyme}
Let $X$ be a Banach space and $N\in\mathbb N$.
If $X$ is OH, then
$\widehat{\otimes}_{\varepsilon,s,N} X$ is OH\@.
\end{theorem}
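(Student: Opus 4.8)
The plan is to mimic the structure of the proof of Theorem~\ref{theoctaopera}~(\ref{mejooctaopera}), using the geometric characterization of octahedrality from Theorem~\ref{charaOHlanthe}, but now applied just once (to $X$) since the two tensor factors coincide. First I would take $P_1,\ldots,P_k\in S_{\widehat{\otimes}_{\varepsilon,s,N}X}$ and $\varepsilon>0$. Recall that, by duality, these elements act as $N$-homogeneous integral polynomials on $X^*$; more concretely, for each $i$ pick $x_i^*\in S_{X^*}$ with $P_i(x_i^*)>1-\varepsilon$, where I write $P_i(x^*)$ for the evaluation of $P_i$ viewed as a polynomial, so that if $P_i=\sum_j x_{ij}^N$ then $P_i(x^*)=\sum_j x^*(x_{ij})^N$. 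The natural candidate perturbation will be $S:=y^N$ for a suitable $y\in S_X$, since $\|P_i+y^N\|\ge |(P_i+y^N)(x^*)|=|P_i(x^*)+x^*(y)^N|$ for any $x^*\in S_{X^*}$, and to push this close to $2$ we want $x^*(y)^N$ close to $1$ while keeping $P_i(x^*)$ close to $1$.

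The key step is to choose $y$ so that, simultaneously for all $i$, there is a functional close to $x_i^*$ (close enough that $P_i$ still evaluates near $1$ on it) which also sends $y$ to something near $1$. To do this, fix a finite-dimensional subspace $E\subseteq X$ that captures the relevant data: take $E$ to contain all the vectors $x_{ij}$ appearing in finite representations $P_i=\sum_j x_{ij}^N$ with $\sum_j\|x_{ij}\|^N$ close to $\|P_i\|=1$ (such representations exist by definition of the injective symmetric norm, up to $\varepsilon$). Apply Theorem~\ref{charaOHlanthe} with this $E$, the functionals $x_1^*,\ldots,x_k^*\in B_{X^*}$, and $\varepsilon$, $\varepsilon_0$: this yields $y\in S_X$ such that, choosing each $\gamma_i=1$, there are $y_i^*\in X^*$ with $\|y_i^*\|\le 1+\varepsilon$, $y_i^*|_E=x_i^*|_E$, and $y_i^*(y)=1$. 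Then for each $i$,
\begin{equation*}
  (1+\varepsilon)^N\|P_i+y^N\|\ge \left|\sum_j y_i^*(x_{ij})^N + y_i^*(y)^N\right|
  = \left|\sum_j x_i^*(x_{ij})^N + 1\right| = |P_i(x_i^*)+1| > 2-\varepsilon,
\end{equation*}
using $y_i^*|_E=x_i^*|_E$ and the fact that the $x_{ij}$ lie in $E$. Since $\|y^N\|=\|y\|^N=1$, this shows $\widehat{\otimes}_{\varepsilon,s,N}X$ is OH after letting $\varepsilon\to 0$.

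The main obstacle is the bookkeeping around finite representations: $P_i$ lives in the completion, so one must first approximate $P_i$ within $\varepsilon$ by an element of the algebraic symmetric tensor product $\otimes^{s,N}X$ with a representation $\sum_j x_{ij}^N$ whose norm is controlled, then enlarge $E$ to contain all these finitely many $x_{ij}$, and finally track how the several $\varepsilon$-type errors (the approximation of $P_i$, the $(1+\varepsilon)$ factor from Theorem~\ref{charaOHlanthe}, and the $1-\varepsilon$ from the initial norming functional) combine into a single bound of the form $2-C\varepsilon$. A minor point to check is that $y^N$ genuinely has norm one in $\widehat{\otimes}_{\varepsilon,s,N}X$ and that the pairing $\langle y_i^*\text{-evaluation}, \cdot\rangle$ against $P_i$ is legitimate, i.e.\ that evaluation at a point of $X^*$ extends continuously to the completion with the stated norm estimate — but this is exactly the content of $(\widehat{\otimes}_{\varepsilon,s,N}X)^*=\mathcal P_I(^NX)$ together with the elementary estimate $\|y^N\|=1$, so no real difficulty arises there.
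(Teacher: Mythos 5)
Your proposal is correct and follows essentially the same route as the paper: approximate by elements of the algebraic symmetric tensor product, pick norming functionals $x_i^*$, apply Theorem~\ref{charaOHlanthe} with $E$ spanned by the representing vectors to obtain $y\in S_X$ and $y_i^*$ with $y_i^*|_E=x_i^*|_E$ and $y_i^*(y)=1$, and test $P_i+y^N$ against $y_i^*$. (Your normalizing factor $(1+\varepsilon)^N$ is in fact the more careful one; the control on $\sum_j\|x_{ij}\|^N$ you mention is not needed for the injective norm, only a finite representation is.)
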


\begin{proof}
Denote by $Y:=\widehat{\otimes}_{\varepsilon,s,N} X$. Pick $u_1,\ldots, u_k\in S_Y$ and $\varepsilon>0$.
Assume, with no loss of generality, that
$u_i:=\sum_{j=1}^{n_i} x_{ij}^N$
for each $i\in\{1,\ldots,k\}$.
For each $i\in\{1,\ldots, k\}$ pick $x_i^*\in S_{X^*}$
such that $\sum_{j=1}^{n_i} x_i^*(x_{ij})^N>1-\varepsilon$.
As $X$ is OH
we can assure from Theorem~\ref{charaOHlanthe} the existence
of $y\in S_X$ and $y_1^*,\ldots, y_k^*\in X^*$ such that
\begin{align*}
   y_i^*(x_{ij}) &= x_i^*(x_{ij})
   \text{ for all } j\in\{1,\ldots, n_i\}, \\
   y_i^*(y) &= 1
   \ \mbox{and}\ \Vert y_i^*\Vert\leq 1+\varepsilon
   \text{ for all } i\in\{1,\ldots, k\}.
\end{align*}
Define $u:=y^N\in S_Y$. Now, given $i\in\{1,\ldots, k\}$, one has
\begin{equation*}
   \Vert u_i+u\Vert\geq \frac{ \sum_{j=1}^{n_i}y_i^*(x_{ij})^N
   + y_i^*(y)^N}{1+\varepsilon}
   =
   \frac{\sum_{j=1}^{n_i}x_i^*(x_{ij})^N+1}{1+\varepsilon}
   > \frac{2-\varepsilon}{1+\varepsilon}.
\end{equation*}
As $\varepsilon>0$ was arbitrary we conclude that $Y$ is OH, as desired.
\end{proof}

Given a Banach space $X$, denote by
$X^{(\infty}$ the completion of the linear space given by the union of
all even duals of $X$.
The above theorem allows us to improve \cite[Theorem~4.2]{ab},
where Acosta and Becerra Guerrero obtained $w^*$-D2P of spaces of homogeneous
integral polynomials when the Cunningham algebra $C(X^{(\infty})$ of $X^{(\infty}$
is infinite-dimensional.
As $\dim C(X^{(\infty}) = \infty$ implies that $X$ (and even
$X^{**}$) is OH (this follows from \cite[Theorem~3.4]{abl}),
we get the following result from Theorem~\ref{estaoctainjesyme}.

\begin{corollary}\label{integralpolyD2P}
Let $X$ be a Banach space.
If $C(X^{(\infty})$ is infinite-dimensional,
then $\mathcal P_I(^N X)$ has the $w^*$-SD2P  for each $N\in\mathbb N$.
\end{corollary}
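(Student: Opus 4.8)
The plan is to reduce everything to Theorem~\ref{estaoctainjesyme} together with the duality between octahedrality and the $w^*$-SD2P. First I would recall that for any Banach space $Z$, the space $Z^*$ has the $w^*$-SD2P if and only if $Z$ is OH; this is the dual counterpart of the statement ``$X$ has the SD2P iff $X^*$ is OH'' recalled in the Introduction, and it is the version we actually need here since $\mathcal P_I(^N X) = (\widehat{\otimes}_{\varepsilon,s,N} X)^*$ is a dual space. Applying this with $Z = \widehat{\otimes}_{\varepsilon,s,N} X$, it suffices to prove that $\widehat{\otimes}_{\varepsilon,s,N} X$ is OH.

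Next I would invoke the hypothesis $\dim C(X^{(\infty}) = \infty$. As the excerpt notes, by \cite[Theorem~3.4]{abl} this forces $X$ itself to be OH (indeed even $X^{**}$ is OH, but we only use the statement for $X$). With $X$ OH in hand, Theorem~\ref{estaoctainjesyme} applies directly and yields that $\widehat{\otimes}_{\varepsilon,s,N} X$ is OH for every $N \in \mathbb N$. Combining this with the duality observation of the previous paragraph gives that $\mathcal P_I(^N X)$ has the $w^*$-SD2P for each $N$, which is exactly the assertion.

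The only genuinely non-routine point is the implication $\dim C(X^{(\infty}) = \infty \Rightarrow X$ is OH, but this is precisely what \cite[Theorem~3.4]{abl} (as quoted in the discussion preceding the corollary) provides, so I would simply cite it rather than reprove it. Everything else — the identification $(\widehat{\otimes}_{\varepsilon,s,N} X)^* = \mathcal P_I(^N X)$ and the OH/$w^*$-SD2P duality — is standard and already recorded in the excerpt. Thus the corollary follows immediately, and the proof is essentially a two-line deduction: ``$\dim C(X^{(\infty}) = \infty$ implies $X$ is OH by \cite[Theorem~3.4]{abl}; hence $\widehat{\otimes}_{\varepsilon,s,N} X$ is OH by Theorem~\ref{estaoctainjesyme}; hence its dual $\mathcal P_I(^N X)$ has the $w^*$-SD2P.''
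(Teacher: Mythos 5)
Your proposal is correct and follows exactly the route the paper intends: $\dim C(X^{(\infty})=\infty$ forces $X$ to be OH by \cite[Theorem~3.4]{abl}, Theorem~\ref{estaoctainjesyme} then gives that $\widehat{\otimes}_{\varepsilon,s,N}X$ is OH, and the OH/$w^*$-SD2P duality from \cite{hlp} transfers this to $\mathcal P_I(^N X)=(\widehat{\otimes}_{\varepsilon,s,N}X)^*$. You also correctly flagged that the duality needed is the ``$Z$ OH $\Leftrightarrow$ $Z^*$ has the $w^*$-SD2P'' version, which is the right care to take here.
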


Next we will weaken the assumptions of Theorem~\ref{estaoctainjesyme} and get examples of Banach spaces whose injective symmetric
tensor products are locally octahedral. The proof relies on the following geometrical characterization of weak octahedrality, whose
proof can be found in \cite{hlp}. 

\begin{theorem}[{\cite[Theorem~2.6]{hlp}}]\label{charaoctatela}
Let $X$ be a Banach space. Then $X$ is WOH if, and only if, for every finite-dimensional subspace $E$ of $X$, $\xast\in B_{\Xast}$,
and $\varepsilon>\nobreak0$,
there are $y\in S_X$ and $\xast_1,\xast_2\in\Xast$ with
$\|\xast_1\|,\|\xast_2\|\leq 1+\varepsilon$ satisfying
$\xast_1|_E=\xast_2|_E=\xast|_E$ and
$\xast_1(y)-\xast_2(y)>2-\varepsilon$.
\end{theorem}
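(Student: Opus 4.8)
The plan is to prove the two implications of the equivalence separately. The direction ``the stated condition $\Rightarrow$ $X$ is WOH'' should be routine: given $x_1,\dots,x_n\in S_X$, $x^*\in B_{X^*}$ and $\varepsilon>0$, I would apply the condition to the finite-dimensional space $E:=\spn\{x_1,\dots,x_n\}$, to $x^*$ and to a sufficiently small $\varepsilon_0>0$, obtaining $y\in S_X$ and $x_1^*,x_2^*\in X^*$ with $\|x_j^*\|\le 1+\varepsilon_0$, $x_1^*|_E=x_2^*|_E=x^*|_E$ and $x_1^*(y)-x_2^*(y)>2-\varepsilon_0$. Since $\|y\|=1$ and $\|x_j^*\|\le 1+\varepsilon_0$, this forces $x_1^*(y)>1-2\varepsilon_0$ and $-x_2^*(y)>1-2\varepsilon_0$, and then for each $i$ and each $t>0$ one of the four functionals $\pm x_1^*$, $\pm x_2^*$ (chosen according to the sign of $x^*(x_i)$ and to whether we are estimating $\|x_i+ty\|$ or $\|x_i-ty\|$) yields $\|x_i\pm ty\|\ge\frac{1}{1+\varepsilon_0}(|x^*(x_i)|+t(1-2\varepsilon_0))\ge\frac{1-2\varepsilon_0}{1+\varepsilon_0}(|x^*(x_i)|+t)$; choosing $\varepsilon_0$ so that $\frac{1-2\varepsilon_0}{1+\varepsilon_0}\ge 1-\varepsilon$ closes this direction.

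For the converse, fix a finite-dimensional $E\subseteq X$, $x^*\in B_{X^*}$ and $\varepsilon>0$, and fix $\varepsilon'\in(0,1)$ with $(1+\varepsilon)(1-\varepsilon')\ge 1$. The key will be a ``subspace form'' of weak octahedrality: there exists $y\in S_X$ with $\|e+y\|\ge(1-\varepsilon')(|x^*(e)|+1)$ for every $e\in E$. Granting this, producing the two functionals is essentially free. Observe first that $y\notin E$ (the displayed inequality gives $\|e+y\|\ge 1-\varepsilon'>0$ for all $e\in E$), so, setting $\alpha:=1-\varepsilon/3\in(0,1]$, the maps $e+sy\mapsto x^*(e)+s\alpha$ and $e+sy\mapsto x^*(e)-s\alpha$ are well defined on $E\oplus\mathbb{R}y$, extend $x^*|_E$, and send $y$ to $\alpha$ and $-\alpha$ respectively. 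By homogeneity each of them has norm at most $1+\varepsilon$ provided $|x^*(e')\pm\alpha|\le(1+\varepsilon)\|e'+y\|$ for all $e'\in E$, and indeed $|x^*(e')\pm\alpha|\le|x^*(e')|+1\le(1+\varepsilon)(1-\varepsilon')(|x^*(e')|+1)\le(1+\varepsilon)\|e'+y\|$. Extending by Hahn--Banach to $x_1^*,x_2^*\in X^*$ then gives $\|x_j^*\|\le 1+\varepsilon$, $x_j^*|_E=x^*|_E$ and $x_1^*(y)-x_2^*(y)=2\alpha>2-\varepsilon$, as required.

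So the whole difficulty is concentrated in the subspace form, and this is the step I expect to be the main obstacle: the definition of WOH only supplies information at finitely many points of $S_X$, whereas we need an estimate valid for \emph{every} $e\in E$. I would obtain it by a compactness-and-coercivity argument. Fix a finite $\delta$-net $\{e_1,\dots,e_n\}$ of the compact sphere $S_E$, with $\delta>0$ to be tuned, and apply WOH to $\{e_1,\dots,e_n\}$, $x^*$ and some $\varepsilon''$ with $0<\varepsilon''<\varepsilon'$, obtaining $y\in S_X$ with $\|e_j\pm ty\|\ge(1-\varepsilon'')(|x^*(e_j)|+t)$ for all $j$ and $t>0$. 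For $e\in E$ with $r:=\|e\|>0$, write $e=ru$ with $u\in S_E$ at distance $<\delta$ from some $e_j$; applying the WOH inequality with $t=1/r$ together with the triangle inequality yields $\|e+y\|\ge(1-\varepsilon'')(|x^*(e)|+1)-(2-\varepsilon'')r\delta$, which exceeds $(1-\varepsilon')(|x^*(e)|+1)$ as long as $r\le R_0:=\frac{\varepsilon'-\varepsilon''}{(2-\varepsilon'')\delta}$. On the other hand, for $r\ge R_1:=\frac{2-\varepsilon'}{\varepsilon'}$ the crude bounds $\|e+y\|\ge r-1$ and $|x^*(e)|\le r$ already give $\|e+y\|\ge(1-\varepsilon')(|x^*(e)|+1)$. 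Choosing $\delta$ small enough that $R_0\ge R_1$ makes the two ranges overlap and establishes the subspace form; tracking the constants so that every inequality holds with a little room to spare is what makes the final inequality $x_1^*(y)-x_2^*(y)>2-\varepsilon$ strict, and completes the proof.
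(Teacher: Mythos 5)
This theorem is quoted from \cite[Theorem~2.6]{hlp}; the paper itself gives no proof, so there is nothing internal to compare your argument against. On its own terms your proof is correct and complete. The easy direction is handled exactly as one would expect: the two extensions $x_1^*,x_2^*$ of $x^*|_E$ with $x_1^*(y)-x_2^*(y)>2-\varepsilon_0$ force $x_1^*(y)>1-2\varepsilon_0$ and $-x_2^*(y)>1-2\varepsilon_0$, and the four functionals $\pm x_1^*,\pm x_2^*$ recover the WOH inequality with constant $\frac{1-2\varepsilon_0}{1+\varepsilon_0}$. For the converse, you correctly isolate the real content in the ``subspace form'' $\|e+y\|\geq(1-\varepsilon')(|x^*(e)|+1)$ for all $e\in E$, and your two-regime compactness argument (a $\delta$-net on $S_E$ with the WOH estimate at $t=1/r$ for $\|e\|\leq R_0$, and the crude bound $\|e+y\|\geq\|e\|-1$ for $\|e\|\geq R_1$, with $\delta$ tuned so the regimes overlap) is sound; I checked the constant $(2-\varepsilon'')r\delta$ and the thresholds $R_0,R_1$. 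The subsequent construction of $x_1^*,x_2^*$ on $E\oplus\mathbb{R}y$ sending $y$ to $\pm\alpha$ with $\alpha=1-\varepsilon/3$, followed by Hahn--Banach, is also correct (one should assume $\varepsilon\leq 1$, say, so that $\alpha\in(0,1]$, which is harmless since the statement for small $\varepsilon$ implies it for large). This route --- reduce to a uniform estimate over the finite-dimensional subspace by compactness, then extend functionals by Hahn--Banach --- is essentially the standard one and is close in spirit to the argument in \cite{hlp}, so I would not call it a genuinely different proof, just an independent and correct reconstruction of the cited one.
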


\begin{proposition}\label{prop:XWOHsymtenLOH}

Let $X$ be a Banach space and let $N$ be an odd number.
If $X$ is WOH, then $\widehat{\otimes}_{\varepsilon,s,N}X$ is LOH\@.
In particular, $\mathcal P_I(^N X)$ has the $w^*$-LD2P\@.

\end{proposition}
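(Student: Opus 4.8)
The plan is to mimic the proof of Theorem~\ref{estaoctainjesyme}, replacing the octahedrality characterization Theorem~\ref{charaOHlanthe} by the weak octahedrality characterization Theorem~\ref{charaoctatela}: the latter produces, for a prescribed $x^*$ and a given finite-dimensional subspace $E$, two functionals $x_1^*,x_2^*$ that both restrict to $x^*$ on $E$ while $x_1^*(y)-x_2^*(y)$ is close to $2$, and this is precisely what is needed to force $\|u+v\|$ and $\|u-v\|$ to be simultaneously close to $2$. Write $Y:=\widehat{\otimes}_{\varepsilon,s,N}X$. Arguing as in the proof of Theorem~\ref{estaoctainjesyme}, and using that $N$ is odd so that every element of $\otimes^{s,N}X$ can be written as $\sum_{j=1}^n x_j^N$, it suffices to verify the local octahedrality condition for a norm-one element of the form $u=\sum_{j=1}^n x_j^N$; the general case then follows by a routine density-and-perturbation argument.

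Fix such a $u\in S_Y$ and $\varepsilon>0$, and pick $x^*\in B_{X^*}$ with $\sum_{j=1}^n x^*(x_j)^N>1-\varepsilon$ (replacing $x^*$ by $-x^*$ if necessary, which is legitimate since $N$ is odd). Applying Theorem~\ref{charaoctatela} to $E:=\spn\{x_1,\dots,x_n\}$, the functional $x^*$, and $\varepsilon$, I would obtain $y\in S_X$ and $x_1^*,x_2^*\in X^*$ with $\|x_1^*\|,\|x_2^*\|\le 1+\varepsilon$, $x_1^*|_E=x_2^*|_E=x^*|_E$, and $x_1^*(y)-x_2^*(y)>2-\varepsilon$. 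Put $v:=y^N$, a norm-one element of $Y$. Since $x_1,\dots,x_n\in E$ one has $\sum_{j=1}^n x_1^*(x_j)^N=\sum_{j=1}^n x_2^*(x_j)^N=\sum_{j=1}^n x^*(x_j)^N>1-\varepsilon$, so testing $u+v$ against $x_1^*/\|x_1^*\|$ and $u-v$ against $x_2^*/\|x_2^*\|$ gives
\begin{equation*}
  \|u+v\|\ \ge\ \frac{\sum_{j=1}^n x^*(x_j)^N+x_1^*(y)^N}{(1+\varepsilon)^N},
  \qquad
  \|u-v\|\ \ge\ \frac{\sum_{j=1}^n x^*(x_j)^N-x_2^*(y)^N}{(1+\varepsilon)^N}.
\end{equation*}

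The decisive point---the only place where oddness of $N$ is genuinely used---is the elementary estimate on $x_i^*(y)$: from $\|x_1^*\|,\|x_2^*\|\le 1+\varepsilon$ and $x_1^*(y)-x_2^*(y)>2-\varepsilon$ one gets $x_1^*(y)>1-2\varepsilon$ and $x_2^*(y)<-(1-2\varepsilon)$, whence $x_1^*(y)^N>(1-2\varepsilon)^N$ and, crucially, $-x_2^*(y)^N>(1-2\varepsilon)^N$ because $N$ is odd. Substituting (and noting that the numerators above are positive, so no absolute-value cancellation occurs), both $\|u+v\|$ and $\|u-v\|$ are at least $\bigl((1-\varepsilon)+(1-2\varepsilon)^N\bigr)/(1+\varepsilon)^N$, which tends to $2$ as $\varepsilon\to 0^+$. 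Hence, taking $\varepsilon$ small enough at the outset makes $\|u\pm v\|$ as close to $2$ as we wish, so $Y$ is LOH. The ``in particular'' then follows at once from $(\widehat{\otimes}_{\varepsilon,s,N}X)^*=\mathcal P_I(^N X)$ together with the standard fact that the dual of an LOH space has the $w^*$-LD2P.

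I do not anticipate a serious obstacle: the argument is essentially a transcription of the proof of Theorem~\ref{estaoctainjesyme} once one observes that WOH is the right hypothesis to feed a pair of functionals separating $y$ from $-y$. The only points requiring care are the sign bookkeeping---oddness of $N$ is truly essential, since for even $N$ the term $-x_2^*(y)^N$ is non-positive and $\|u-v\|$ need not approach $2$, so the statement itself would fail---and the trivial check that $x_1^*$ and $x_2^*$ are nonzero (immediate from $|x_i^*(y)|>1-2\varepsilon$), which is what makes the normalizations $x_i^*/\|x_i^*\|$ legitimate.
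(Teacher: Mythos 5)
Your proposal is correct and follows essentially the same route as the paper's own proof: reduce to $u=\sum_j x_j^N$, apply Theorem~\ref{charaoctatela} to $E=\spn\{x_1,\dots,x_n\}$, set $v:=y^N$, and use oddness of $N$ to get $-x_2^*(y)^N>0$ so that both $\|u+v\|$ and $\|u-v\|$ are close to $2$. Your bookkeeping ($x_i^*(y)$ bounded below by $1-2\varepsilon$ and the denominator $(1+\varepsilon)^N$ from normalizing $x_i^*$) is in fact slightly more careful than the constants printed in the paper, and the conclusion is unaffected.
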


\begin{proof}
Denote by $Y:=\widehat{\otimes}_{\varepsilon,s,N}X$. Consider $u:=\sum_{i=1}^k x_i^N\in S_Y$ and $\varepsilon>0$.
It is enough to prove the existence of $v\in S_Y$ such that
$\Vert u\pm v\Vert>\frac{1-\varepsilon+(1-\varepsilon)^N}{1+\varepsilon}$. To this aim consider $x^*\in S_{X^*}$ such that $1-\varepsilon<x^*(u)=\sum_{i=1}^n x^*(x_i)^N$.

Using Theorem~\ref{charaoctatela} we can find
$x_1^*,x_2^*\in X^*$ and $y\in S_X$ such that
\begin{align*}
  &x_1^*(x_i)=x_2^*(x_i)=x^*(x_i)
  \text{ for all } i\in\{1,\ldots, k\}, \\
  &x_1^*(y)-x_2^*(y)>2-\varepsilon, \; \text{and}\\
  &\Vert x_1^*\Vert\leq 1+\varepsilon,\Vert x_2^*\Vert\leq 1+\varepsilon.
\end{align*}
Now define $v:=y^N\in S_Y$. From $x_1^*(y)-x_2^*(y)>2-\varepsilon$
we conclude that $x_1^*(y)>1-\varepsilon$ and $-x_2^*(y)>1-\varepsilon$.
Consequently
\begin{align*}
   \Vert u+v\Vert &\geq
   \frac{\sum_{i=1}^k x_1^*(x_i)^N+x_1^*(y)^N}{1+\varepsilon} >
   \frac{\sum_{i=1}^k x^*(x_i)^N+(1-\varepsilon)^N}{1+\varepsilon} \\
   &> \frac{1-\varepsilon+(1-\varepsilon)^N}{1+\varepsilon}.
\end{align*}
On the other hand
\begin{align*}
   \Vert u-v\Vert &\geq
   \frac{\sum_{i=1}^k x_2^*(x_i)^N-x_2^*(y)^N}{1+\varepsilon}
   >
   \frac{\sum_{i=1}^k x^*(x_i)^N+(1-\varepsilon)^N}{1+\varepsilon} \\
   &>
   \frac{1-\varepsilon+(1-\varepsilon)^N}{1+\varepsilon}.
\end{align*}
Hence we have $\|u \pm v\| >
\frac{1-\varepsilon+(1-\varepsilon)^N}{1+\varepsilon}$, so $Y$ is LOH
\end{proof}

\begin{rem}
We do not know whether one can get D2P in the above proposition.
Neither do we know whether the above proposition holds for an even number $N$.
\end{rem}

We pass now to projective symmetric tensor products.
Given a Banach space $X$, we define the
\textit{($N$-fold) projective symmetric tensor product} of $X$, denoted by
$\widehat{\otimes}_{\pi,s,N} X$, as the completion of the space
$\otimes^{s,N}X$ under the norm
\begin{equation*}
   \Vert u\Vert:=\inf
   \left\{
      \sum_{i=1}^n \vert \lambda_i\vert \Vert x_i\Vert^N :
      u:=\sum_{i=1}^n \lambda_i x_i^N, n\in\mathbb N, x_i\in X
   \right\}.
\end{equation*}
The dual, $(\widehat{\otimes}_{\pi,s,N} X)^*=\mathcal P(^N X)$, is
the Banach space of $N$-homogeneous continuous polynomials on $X$ (see \cite{flo} for background).

It is known that $\widehat{\otimes}_{\pi,s,N}X$ has the SD2P provided
the Banach space $X$ is ASQ \cite[Theorem~3.3]{blr3}.
The proof of this result relies heavily
on the fact that the sequences involved in the definition of ASQ
can be chosen to be $c_0$-sequences (see \cite[Lemma~2.6]{all}).
If one tries to copy the idea of the ASQ proof in the WASQ
setting it does not work because WASQ Banach spaces do not
have to contain any isomorphic copy of $c_0$
\cite[Proposition~3.5]{all}.
Consequently, in order
to connect WASQ spaces with diameter two properties in projective
symmetric tensor products, we shall need an extra assumption.

A Banach space $X$ has the
\textit{Dunford-Pettis property} if every weakly compact operator
$T:X\longrightarrow Y$ is weak-to-norm sequentially continuous,
i.e., whenever $x_n \rightarrow x$ in $X$
weakly then $Tx_n \rightarrow Tx$ in norm in $Y$
(cf.\ e.g.\ \cite[Proposition~5.4.2]{alka}).
It is known that every continuous polynomial on a Banach space having
the Dunford-Pettis property is weakly sequentially continuous \cite[Proposition~2.34]{din}.

\begin{proposition}\label{dunfordpettistensoprosym}

Let $X$ be a Banach space with the Dunford-Pettis property.
If $X$ is WASQ, then $\widehat{\otimes}_{\pi,s,N}X$
has the LD2P for each $N\in\mathbb N$.

\end{proposition}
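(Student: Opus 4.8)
To show that $\widehat{\otimes}_{\pi,s,N}X$ has the LD2P, I will verify it is LASQ, which is equivalent to: for every $u\in S_{\widehat{\otimes}_{\pi,s,N}X}$ and $\varepsilon>0$ there is $w$ with $\|w\|$ close to $1$ and $\|u\pm w\|\le 1+\varepsilon$ (using \cite[Proposition~2.1]{all}). As in the proofs of Proposition~\ref{tensorc0ASQ} and Proposition~\ref{ASQtensorproLASQ}, I would first reduce to a finitely supported element: since $\conv\{x^N : x\in S_X\}$ is dense in $B_{\widehat{\otimes}_{\pi,s,N}X}$ (this is the symmetric analogue of \cite[Proposition~2.2]{rya}, see \cite{flo}), pick $v=\sum_{i=1}^n \lambda_i x_i^N$ with $\|u-v\|\le\varepsilon$, $x_i\in S_X$, $\lambda_i\ge 0$, $\sum\lambda_i=1$, so $\|v\|\ge 1-\varepsilon$.

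**Choosing the perturbation.** Now apply WASQ of $X$ to the finite set $\{x_1,\dots,x_n\}$: there is a \emph{weakly null} sequence $\{z_m\}$ in $B_X$ with $\|z_m\|\to 1$ and $\|x_i\pm z_m\|\to 1$ for each $i$. The natural candidate for the perturbation is $w_m := \sum_{i=1}^n \lambda_i z_m^N$ (or possibly $\sum_i \lambda_i x_i^{N-1}\cdot(\text{something})$ — but the symmetric tensor only allows pure powers, so I would work with $z_m^N$ directly). The triangle inequality in the projective symmetric norm gives $\|v\pm w_m\|\le\sum_i\lambda_i\|x_i\pm z_m\|^N \to 1$ — wait, this is not quite right: $\|x_i^N \pm z_m^N\|$ is not controlled by $\|x_i\pm z_m\|^N$. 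The correct estimate uses the standard polarization/multilinearity identity $a^N-b^N = \sum_{j=0}^{N-1} a^{N-1-j}(a-b)b^j$ in the $N$-fold tensor product, giving $\|x_i^N - z_m^N\| \le N\|x_i - z_m\|$ when $x_i,z_m\in B_X$, and similarly for $x_i^N + z_m^N$ using $-z_m$. Hence $\|v\pm w_m\|\le 1 + N\sum_i\lambda_i\|x_i\mp z_m\| \to 1$ as $m\to\infty$. (Here I use $\|x_i\mp z_m\|\to 1$, but I actually need $\to$ something small; so instead I should center: write $w_m$ so that $v\pm w_m$ is close to $v$. The cleanest fix: since $\|x_i^N-z_m^N\|\le N\|x_i-z_m\|$ is too crude, I instead note $\|x_i^N \pm z_m^N\| \le 1 + \|x_i^N \pm z_m^N - x_i^N\|$... the honest route is to bound $\|v\pm w_m\|\le \|v\| + \|w_m\|$, which does not tend to $1$.)

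**Where the Dunford–Pettis property enters.** This is the crux, and the reason the hypothesis is needed. The real argument must show $\|u\pm w_m\|\to$ something $\le 1$, and the mechanism is: $\|u\pm w_m\| = \sup\{|P(u)\pm P(\text{-related quantity})| : P\in\mathcal P(^N X),\ \|P\|\le 1\}$, since $(\widehat{\otimes}_{\pi,s,N}X)^*=\mathcal P(^N X)$. For a norm-one $N$-homogeneous polynomial $P$, the Dunford–Pettis property forces $P$ to be weakly sequentially continuous (by \cite[Proposition~2.34]{din}), hence $P(z_m)\to P(0)=0$ along the weakly null sequence $\{z_m\}$. More is true: the associated symmetric $N$-linear form $\widehat P$ satisfies $\widehat P(x_{i},\dots,x_{i},z_m)\to 0$, etc., because fixing all but one slot gives a linear functional, and $z_m\to 0$ weakly; the mixed terms $\widehat P(x_i^{(N-r)},z_m^{(r)})$ with $r\ge 1$ all tend to $0$. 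Therefore, evaluating $P$ on $v\pm w_m = \sum_i\lambda_i(x_i^N \pm z_m^N)$ and expanding, I get $P(v\pm w_m) = \sum_i\lambda_i(P(x_i)\pm P(z_m)) \to \sum_i\lambda_i P(x_i) = P(v)$, \emph{uniformly is the issue} — but here the trick (as in the D2P part of Proposition~\ref{tensorc0ASQ}) is that I only need, for \emph{each fixed finite slice datum}, to choose $m$ large; since in LASQ/LD2P the element $u$ and $\varepsilon$ are fixed first and we produce one good $w$, I fix $u$, hence $v$, hence finitely many relevant polynomials via a suitable finite $\varepsilon$-net argument on the dual ball — but the dual ball of $\mathcal P(^N X)$ need not be weak$^*$-metrizable. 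The clean resolution: I do \emph{not} pass to the sup directly; instead I argue that for the fixed $v$, the map $P\mapsto (P(x_1),\dots,P(x_n))$ has finite-dimensional range, and on the relevant finite-dimensional space I can make $P(z_m)$ small simultaneously — more precisely, $\|w_m\| = \sup_{\|P\|\le 1}|\sum_i\lambda_i P(z_m)|$, and each $P$ restricted to $\spn\{z_m\}$... this still needs care.

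**Expected main obstacle and its handling.** The genuine difficulty — and I expect this to be the heart of the proof — is converting the \emph{pointwise} vanishing $P(z_m)\to 0$ (valid for each individual weakly sequentially continuous $P$) into a \emph{uniform over $\|P\|\le 1$} statement, i.e., $\|w_m\|_{\widehat{\otimes}_{\pi,s,N}X}$ large while $\|v\pm w_m\|$ small, which requires understanding $\sup_{\|P\|\le1}$ of the perturbation and cross terms. I would handle this exactly as in the D2P portion of the proof of Proposition~\ref{tensorc0ASQ}: fix $u_0$, $P_1,\dots,P_k$, $\alpha>0$ defining a relatively weakly open set $\mathcal U$; replace $u_0$ by a nearby finitely-supported $v$; then the finitely many data $\{\widehat{P_j}(x_{i_1},\dots,x_{i_{N-1}},\cdot)\}$ are finitely many elements of $X^*$, so along the weakly null $\{z_m\}$ all the mixed evaluations $\langle \widehat{P_j}(x_{i}^{(N-1)},\cdot),z_m\rangle$ and higher-order mixed terms (which are bounded multilinear in finitely many fixed vectors and $z_m$, hence — using Dunford–Pettis on each linearized slot and that the $r$-linear restriction to a fixed finite-dimensional subspace spanned by the $x_i$ in the remaining slots is continuous from weak to norm) tend to $0$. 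Thus $v\pm w_m\in\mathcal U$ for large $m$, while $\|w_m\|\ge |P(w_m)|$ for a well-chosen norm-one $P$ detecting $z_m^N$; since $\|z_m\|\to1$, taking $P$ to peak at $z_m$ gives $\|w_m\|\to\|v\|\ge 1-\varepsilon$ after normalizing. Then $\|v+w_m-(v-w_m)\|=2\|w_m\|\ge 2-2\varepsilon$ witnesses the LD2P (indeed it witnesses LASQ: $\|u\pm \tfrac{w_m}{\|w_m\|}\|\le 1+C\varepsilon$). I expect the bookkeeping for the mixed multilinear terms — showing each is a bounded functional in the $z_m$-slot after fixing the rest, hence killed by weak nullity plus Dunford–Pettis — to be the one genuinely delicate point, everything else being a routine adaptation of Propositions~\ref{tensorc0ASQ} and~\ref{ASQtensorproLASQ}.
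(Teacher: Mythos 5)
Your proposal takes the primal route (trying to show $\widehat{\otimes}_{\pi,s,N}X$ is LASQ), and it contains a genuine, unresolved gap that you yourself flag but never close: the norm estimate $\Vert v\pm w_m\Vert\leq 1+\varepsilon$. For the perturbation $w_m=\sum_i\lambda_i z_m^N$ one has $P(v\pm w_m)=P(v)\pm P(w_m)$ for every $P\in\mathcal P(^NX)$, and while the Dunford--Pettis property gives $P(w_m)\to 0$ for each \emph{fixed} $P$, you need this \emph{uniformly} over $B_{\mathcal P(^NX)}$ to bound the norm --- and uniform decay is impossible, since $\sup_{\Vert P\Vert\leq 1}\vert P(w_m)\vert=\Vert w_m\Vert$, which you simultaneously need to stay close to $1$. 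The factoring trick that saves Propositions~\ref{tensorc0ASQ} and~\ref{ASQtensorproLASQ} (writing $v\pm z=\sum_i\lambda_i(x_i\pm x)\otimes y_i$ and using $\Vert x_i\pm x\Vert\leq 1+\varepsilon$) is unavailable here because $x_i^N\pm z_m^N\neq(x_i\pm z_m)^N$, exactly as you observe. Your closing paragraph with relatively weakly open sets has the same defect: $v\pm w_m$ need not lie in the unit ball, so it does not witness anything; and in any case the paper only claims LD2P, not LASQ, of the symmetric projective tensor product, so the stronger primal statement you are aiming for may simply be false.

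The paper sidesteps all of this by dualizing: LD2P of $Y$ is equivalent to LOH of $Y^*=\mathcal P(^NX)$, and LOH is a \emph{per-functional} statement, so no uniformity over the dual ball is ever needed. Concretely, given a single $P\in S_{\mathcal P(^NX)}$ one picks $x\in S_X$ with $P(x)>1-\varepsilon$, takes the WASQ sequence $\{y_n\}$ at $x$, uses Dunford--Pettis (via weak sequential continuity of $P$ and \cite[Lemma~1.1]{fj}) to get $P(x\pm y_n)\to P(x)$, chooses $f\in S_{X^*}$ norming $y_n$ (whence $\vert f(x)\vert<2\varepsilon$), and builds an explicit norm-one polynomial $Q$ (namely $f^N$ for odd $N$, or $gf^{N-1}$ with $g(x)=1$ for even $N$) so that both $P+Q$ and $P-Q$ are large at the normalized points $(x\pm y_n)/\Vert x\pm y_n\Vert$. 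If you want to salvage your write-up, this change of viewpoint --- prove LOH of the polynomial space rather than LASQ of the tensor product --- is the missing idea.
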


\begin{proof}
Let $N\in\mathbb N$. It is enough to prove that
$\left(\widehat{\otimes}_{\pi,s,N}X \right)^*=\mathcal P(^NX)$ is LOH\@.
To this aim pick $P\in S_{\mathcal P(^NX)}$ and $\varepsilon>0$.
Pick $x\in S_X$ such that $P(x)>1-\varepsilon$.

Since $X$ is WASQ we can find a sequence $\{y_n\}$ in
$B_X$ such that $y_n \rightarrow 0$ weakly, $\Vert y_n\Vert \rightarrow 1$ and
$\Vert x\pm y_n\Vert \rightarrow 1$.
As $\{y_n\}$ is a weakly-null sequence and $X$ has
the Dunford-Pettis property we conclude that $P(y_n) \rightarrow 0$
\cite[Proposition~2.34]{din}.
Consequently $P(x\pm y_n) \rightarrow P(x)>1-\varepsilon$
\cite[Lemma~1.1]{fj}. So we can find $n\in\mathbb N$ big enough to ensure
\begin{equation*}\label{condipoliwasqprop}
   P(x\pm y_n)>1-\varepsilon,
\end{equation*}
$\Vert x\pm y_n\Vert\leq 1+\varepsilon$ and $\Vert y_n\Vert>1-\varepsilon$.
Choose $f\in S_{X^*}$ such that $f(y_n)>1-\varepsilon$. Now
\begin{equation*}
   1+\varepsilon\geq \Vert y_n \pm x \Vert \geq f(y_n \pm x)
   \ge 1-\varepsilon \pm f(x),
\end{equation*}
which implies that $\vert f(x)\vert < 2\varepsilon$.

Now we shall argue by cases:
If $N$ is odd, define $Q(x):=f(x)^N$ for each $x\in X$,
which is a norm one $N$-homogeneous polynomial. Now
\begin{align*}
   \Vert P\pm Q\Vert &\geq (P\pm Q)
   \left(\frac{x\pm y_n}{\Vert x\pm y_n\Vert}\right)
   =
   \frac{P(x\pm y_n)\pm Q(x\pm y_n)}{\Vert x\pm y_n\Vert^N} \\
   &>
   \frac{1-\varepsilon\pm (f(x)\pm f(y))^N}{(1+\varepsilon)^N}
   =
   \frac{1-\varepsilon+(f(y)\pm f(x))^N}{(1+\varepsilon)^N} \\
   &>
   \frac{1-\varepsilon-(1-3\varepsilon)^N}{(1+\varepsilon)^N}.
\end{align*}
If $N$ is even, pick $g\in S_{X^*}$ such that $g(x)=1$,
which implies that $\vert g(y_n)\vert<\varepsilon$.
Define $Q(z)=g(z)f(z)^{N-1}$ for all $z\in X$,
which is a norm one polynomial. Then
\begin{align*}
   \Vert P\pm Q\Vert &\geq (P\pm Q)
   \left(\frac{x\pm y_n}{\Vert x\pm y_n\Vert}\right)
   =
   \frac{P(x\pm y_n)\pm Q(x\pm y_n)}{\Vert x\pm y_n\Vert^N} \\
   &>
   \frac{1-\varepsilon\pm (g(x)\pm g(y))(f(x)\pm f(y))^{N-1}}{(1+\varepsilon)^{N}}.
\end{align*}
Similar estimates to the ones above allow us to conclude that
\begin{equation*}
   \frac{1-\varepsilon\pm (g(x)\pm g(y))(f(x)\pm f(y))^{N-1}}{(1+\varepsilon)^{N}}
   >
   \frac{1-\varepsilon+(1-\varepsilon)(1-3\varepsilon)^{N-1}}{(1+\varepsilon)^N}.
\end{equation*}
In any case, as $\varepsilon>0$ was arbitrary,
we conclude that $\mathcal P(^NX)$ is LOH\@.
\end{proof}

In order to exhibit examples of Banach spaces where the above proposition
applies we shall prove the following result about inheritance of WASQ
to subspaces.
Note that it extends the result for the D2P from
\cite[Theorem~2.2]{blr4}, where it is proved that D2P is
inherited by finite codimensional subspaces. In addition,
the theorem below seems to be the only known
result about inheritance of WASQ by subspaces.

\begin{theorem}\label{subespWASQ}
Let $X$ be WASQ and let $Y\subseteq X$ be a closed subspace.
If $X/Y$ has the Schur property, then $Y$ is WASQ\@.
\end{theorem}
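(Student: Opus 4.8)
The plan is to verify the defining sequence condition for WASQ directly in $Y$, using a weakly-null almost-square sequence coming from $X$ together with the Schur property of the quotient to push it (after a perturbation) back inside $Y$. So fix $y_1,\ldots,y_m\in S_Y$; since WASQ is equivalent (by \cite[Theorem~2.8]{all} and the characterization in \cite[Theorem~2.4]{all}) to a statement about a single unit vector, it suffices to work with one $y_0\in S_Y$ and produce a weakly-null sequence $\{v_n\}\subseteq B_Y$ with $\|v_n\|\to 1$ and $\|y_0\pm v_n\|\to 1$. Because $X$ is WASQ, there is a sequence $\{z_n\}\subseteq B_X$ with $z_n\to 0$ weakly, $\|z_n\|\to 1$ and $\|y_0\pm z_n\|\to 1$. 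The difficulty, of course, is that the $z_n$ need not lie in $Y$.

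The key step is to correct the $z_n$ back into $Y$ using that $X/Y$ is Schur. Let $q\colon X\to X/Y$ be the quotient map. Since $z_n\to 0$ weakly in $X$, we have $q(z_n)\to 0$ weakly in $X/Y$, and the Schur property forces $\|q(z_n)\|\to 0$. By the definition of the quotient norm, choose $t_n\in Y$ with $\|z_n-t_n\|=\|q(z_n)\|+o(1)\to 0$; then set $v_n:=z_n-t_n\in Y$. (One may further rescale the $v_n$ by $\max\{1,\|v_n\|\}^{-1}$ to land inside $B_Y$; this changes nothing asymptotically since $\|v_n\|\to 1$.) Now $\|v_n-z_n\|\to 0$ gives simultaneously $\|v_n\|\to 1$, $\|y_0\pm v_n\|\to 1$, and, since $z_n\to 0$ weakly and norm convergence implies weak convergence, $v_n\to 0$ weakly in $X$; as $Y$ is a (weakly closed) subspace, $v_n\to 0$ weakly in $Y$ as well. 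Hence $\{v_n\}$ witnesses WASQ for $y_0$.

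The main obstacle—and the only place any care is needed—is the passage from "WASQ for a finite set of unit vectors" to "WASQ for a single unit vector," i.e., making sure the reduction to one vector $y_0$ is legitimate for the quotient-space argument. This is handled exactly as in \cite{all}: WASQ is equivalent to the assertion that for every finite-dimensional $E\subseteq X$ and every $\varepsilon>0$ there is $z\in S_X$ with $z$ weakly close to $0$ on finitely many prescribed functionals and $\|e+\lambda z\|\le(1+\varepsilon)\max\{\|e\|,|\lambda|\}$ for all $e\in E$, $\lambda\in\mathbb R$; applying the correction $z\mapsto z-t$ with $\|t\|$ small preserves this inequality up to a further $\varepsilon$, and preserves the weak-nullness since norm-small perturbations move the relevant functional values by an arbitrarily small amount. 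Thus the same argument runs verbatim with a finite-dimensional $E\subseteq Y$ in place of a single $y_0$, and we conclude that $Y$ is WASQ.
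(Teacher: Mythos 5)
Your argument is correct and follows essentially the same route as the paper's proof: take the WASQ sequence $\{z_n\}$ for $y_0$ in $X$, use weak-to-weak continuity of the quotient map together with the Schur property of $X/Y$ to get $\|q(z_n)\|\to 0$, and replace each $z_n$ by a nearby element of $Y$, noting that all three limit conditions survive a norm-vanishing perturbation. (Two small slips worth fixing: the corrected element should be $v_n:=t_n\in Y$ rather than $z_n-t_n$, so that indeed $v_n\in Y$ and $\|v_n-z_n\|\to 0$; and the closing paragraph about reducing from finitely many unit vectors to one is unnecessary, since WASQ is by definition a condition on a single unit vector.)
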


\begin{proof}
Pick $y\in S_Y\subseteq S_X$. As $X$ is WASQ we can find $\{x_n\}$
a sequence in $B_X$ such that
$\Vert y\pm x_n\Vert \rightarrow 1$,
$\Vert x_n\Vert \rightarrow 1$, and
$x_n \rightarrow 0$ weakly.

Consider the quotient map $\pi:X\longrightarrow X/Y$, which is a
weak-to-weakly continuous map. We get
$\pi(x_n) \rightarrow 0$ weakly and, by the Schur property of $X/Y$,
$\pi(x_n) \rightarrow 0$ even in norm. Hence,
for each $n\in\mathbb N$, we can find $y_n\in B_Y$ such that
\begin{equation*}
   \Vert y_n-x_n\Vert<\Vert \pi(x_n)\Vert+\frac{1}{n}.
\end{equation*}
We shall prove that $\{y_n\}\subseteq B_Y$ satisfies our requirements.
On the one hand
\begin{equation*}
   1\geq \Vert y_n\Vert
   \geq
   \Vert x_n\Vert-\Vert y_n-x_n\Vert
   \text{ for all } n\in\mathbb N,
\end{equation*}
so $\Vert y_n\Vert \rightarrow 1$. On the other hand
\begin{equation*}
  \Vert y\pm x_n\Vert-\Vert x_n-y_n\Vert\leq \Vert y\pm y_n\Vert\leq \Vert y\pm x_n\Vert+\Vert x_n-y_n\Vert
\end{equation*}
holds for each $n\in\mathbb N$, hence $\Vert y\pm y_n\Vert \rightarrow 1$.
Finally, as $x_n \rightarrow 0$
weakly and $x_n-y_n \rightarrow 0$ weakly
(even in norm) we conclude that $y_n \rightarrow 0$ weakly.
By definition, $Y$ is WASQ, as desired.
\end{proof}

\begin{rem}
It is clear that the above proof also shows that given a Banach space
$X$ which is ASQ and $Y\subseteq X$ a closed subspace such that $X/Y$
has the Schur property, then $Y$ is ASQ\@. However, this can be
deduced from a more general result \cite[Theorem~3.6]{abrahamsen},
where it is proved that ASQ is inherited by closed subspaces whose
quotient space does not contain any isomorphic copy of $c_0$.
\end{rem}

In \cite{ab2} it was shown that the $N$-fold
projective symmetric tensor product of $L_1(\mu)$,
$\mu$ a $\sigma$-finite measure,
has the LD2P for each $N\in\mathbb N$.
Now we can combine Proposition~\ref{dunfordpettistensoprosym}
and Theorem~\ref{subespWASQ} to extend Theorem~3.1 of that paper,
by considering some closed subspaces of $L_1(\mu)$.
The proof relies on the fact that $L_1(\mu)$ is WASQ,
a fact that can be proved using the ideas in \cite[Lemma~3.3]{kub}.

\begin{corollary}
Let $X$ be a complemented subspace of $L_1(\mu)$, where $(\Omega,\Sigma,\mu)$
is a measure space and $\mu$ is a $\sigma$-finite measure. If $L_1(\mu)/X$
is isomorphic to $\ell_1$  then  $\widehat{\otimes}_{\pi,s,N}X$ has the LD2P for each $N\in\mathbb N$.
\end{corollary}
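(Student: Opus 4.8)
The plan is to obtain the corollary as a direct consequence of Proposition~\ref{dunfordpettistensoprosym} combined with Theorem~\ref{subespWASQ}; thus it suffices to show that $X$ is WASQ and that $X$ has the Dunford--Pettis property. Write $L_1:=L_1(\mu)$. The starting point is that, since $\mu$ is $\sigma$-finite, $L_1$ is WASQ (this can be proved along the lines of \cite[Lemma~3.3]{kub}) and $L_1$ has the Dunford--Pettis property (which is classical).

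First I would verify that $X$ is WASQ. By hypothesis $X$ is a closed subspace of $L_1$ and the quotient $L_1/X$ is isomorphic to $\ell_1$, hence $L_1/X$ has the Schur property. Since $L_1$ is WASQ, Theorem~\ref{subespWASQ} immediately yields that $X$ is WASQ.

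Next I would check that $X$ has the Dunford--Pettis property. Here the assumption that $X$ is \emph{complemented} in $L_1$ enters: if $P\colon L_1\to X$ is a bounded projection and $T\colon X\to Z$ is weakly compact, then $T\circ P\colon L_1\to Z$ is weakly compact, hence weak-to-norm sequentially continuous by the Dunford--Pettis property of $L_1$; restricting to $X$ (and using that the inclusion $X\hookrightarrow L_1$ is weak-to-weak continuous and that $P|_X$ is the identity) shows that $T$ itself is weak-to-norm sequentially continuous. Thus $X$ has the Dunford--Pettis property.

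With $X$ shown to be WASQ and to have the Dunford--Pettis property, Proposition~\ref{dunfordpettistensoprosym} gives that $\widehat{\otimes}_{\pi,s,N}X$ has the LD2P for every $N\in\mathbb N$, as claimed. There is no genuine obstacle here beyond correctly invoking the earlier results; the only inputs not established in the excerpt are that $L_1(\mu)$ is WASQ and has the Dunford--Pettis property, and that the Dunford--Pettis property passes to complemented subspaces, all of which are standard.
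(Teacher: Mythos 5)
Your proof is correct and follows exactly the paper's argument: use the Schur property of $L_1(\mu)/X\cong\ell_1$ together with Theorem~\ref{subespWASQ} to get that $X$ is WASQ, observe that the Dunford--Pettis property of $L_1(\mu)$ passes to the complemented subspace $X$, and then apply Proposition~\ref{dunfordpettistensoprosym}. The only difference is that you spell out the standard verification that the Dunford--Pettis property is inherited by complemented subspaces, which the paper leaves implicit.
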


\begin{proof}
As $L_1(\mu)/X$ is isomorphic to $\ell_1$ then $L_1(\mu)/X$ has the
Schur property. Consequently, $X$ is WASQ by Theorem~\ref{subespWASQ}.
In addition, $X$ has the Dunford-Pettis property because $L_1(\mu)$
has the Dunford-Pettis property \cite[Theorem~5.4.5]{alka}
and $X$ is complemented in $L_1(\mu)$. So the result holds as
an immediate application of Proposition~\ref{dunfordpettistensoprosym}.
\end{proof}

\begin{rem}
Note that given $L_1(\mu)$ as in the above corollary and given
$X\subseteq L_1(\mu)$ an infinite-dimensional and complemented subspace,
there are conditions on $L_1(\mu)/X$ which guarantee that it is isomorphic
to $\ell_1$ such as having an unconditional basis or
the Radon-Nikod\'{y}m property (see \cite[p.~122]{alka}).
Moreover, it is conjectured that each infinite-dimensional complemented
subspace of $L_1(\mu)$ is isomorphic either to $L_1(\mu)$ or to
$\ell_1$ (see e.g. \cite[Conjecture~5.6.7]{alka}).
If this conjecture were correct, the above corollary would have a wide
range of applications.
\end{rem}

\section{Some remarks and open questions}
\label{remarks}
In this section we will pose some open questions related to our main results.
In light of Theorem~\ref{theoctaopera} the following
question arises.

\begin{question}

Let $X$ and $Y$ be Banach space and $H\subseteq L(X^*,Y)$ a closed subspace such
that $X\otimes Y\subseteq H$ and that each element of $H$ is weak$^*$-to-weakly continuous.
Is $H$ OH whenever $X$ is OH?

\end{question}

Theorem~\ref{theoctaopera}~(\ref{corounillocta}) provides
a partial positive answer. On the other hand, in
\cite[Theorem~4.2]{kkw} an example is given of a complex two-dimensional
Banach space $E$ such that $L_1^\mathbb C([0,1])\iten E$ fails to
have the Daugavet property and, consequently, is a natural candidate
for a negative answer to the above question.

In Section~\ref{tensorproducts} we saw conditions on Banach spaces
$X$ and $Y$ which
ensure that certain subspaces of $L(X^*,Y)$ are LOH or OH\@. So it is natural to wonder

\begin{question}
Let $X$ and $Y$ be Banach spaces and $H\subseteq L(X^*,Y)$ be a closed subspace
such that $X\otimes Y\subseteq H$. When is $H$ WOH?
\end{question}
We note that from Proposition~\ref{tensorc0ASQ}
we get that $L(X,\ell_1) = (c_0 \pten X)^*$ is WOH for any Banach
space $X$.
Similarly, for any $K$ infinite compact Hausdorff space,
$L(X,C(K)^*) = (C(K) \pten X)^*$ is WOH for any Banach space $X$
by \cite[Theorem~4.1]{abr}.

Corollary~\ref{estasqinject} says that the injective tensor product is ASQ
provided one of the factors is ASQ\@.
For the symmetric case we ask:

\begin{question}

Let $X$ be a Banach space.
Is $\widehat{\otimes}_{\varepsilon,s,N}X$ ASQ for each $N\in\mathbb N$
whenever $X$ is ASQ?
\end{question}

Concerning octahedrality in projective tensor products
we have only seen one result, Corollary~\ref{octaprojec}.
However, there is a wider class of examples of OH projective tensor spaces.

\begin{example}
Let $X$ and $Y$ be Banach spaces.

\begin{enumerate}
\item\label{questOH:0}
  We saw in the example following Corollary~\ref{estasqinject}
  that given $1< p\leq q<\infty$ then $K(\ell_p,\ell_q)$
  is ASQ\@.
  By \cite[Proposition~2.5]{all} the dual space $\ell_p\pten
  \ell_{q^*}$ is OH, where $\frac{1}{q}+\frac{1}{q^*}=1$.
\item\label{questOH:1}
  If $X = L_1(\mu)$ then $X \pten Y = L_1(\mu,Y)$ is OH
  by a straightforward computation.

\item\label{questOH:2}
   If $X=\ell_1(I)$ for a suitable infinite set $I$, then $X\pten Y$ is OH\@.

   \noindent
   This follows from the identification $\ell_1(I) \pten Y=\ell_1(I,Y)$
   \cite[Example~2.6]{rya} and the fact that infinite $\ell_1$-sums of
   Banach spaces are always OH\@.
\item\label{questOH:3}
   If $C(X)$ is infinite-dimensional, then $X\pten Y$ is OH\@.

   \noindent
   This follows from the fact that
   $(X \pten Y)^* = L(Y,X^*)$ has an infinite-dimensional
   centralizer \cite[Lemma~VI.1.1]{hww}
   and thus it has the SD2P \cite[Proposition~3.3]{abl}.

\item\label{questOH:4}
   If $H$ is a Hilbert space, then $\mathcal F(H)\pten H$ is OH,
   where $\mathcal{F}(H)$ is the Lipschitz-free space over $H$.

   \noindent
   This follows from \cite[Theorem~2.5]{blr5} and from
   the fact that the space of Lipschitz functions on $H$
   which vanish at $0$ under the Lipschitz norm
   can identified with $L(\mathcal F(H),H)$.
\end{enumerate}
\end{example}

In view of (\ref{questOH:1}),(\ref{questOH:2}),
(\ref{questOH:3}) and (\ref{questOH:4}) of the above example we wonder

\begin{question}
Let $X$ and $Y$ be Banach spaces. Is $X\pten Y$ OH whenever $X$ is OH?
\end{question}

\end{document}